\newcommand{\C}{\mathbb{C}}
\newtheorem{theorem}{Theorem}[section]
\newtheorem{lemma}[theorem]{Lemma}
\newtheorem{corollary}[theorem]{Corollary}
\newtheorem{proposition}[theorem]{Proposition}
\theoremstyle{definition}
\patchcmd{\section}{\scshape}{\large}{}{}
\patchcmd{\subsection}{\bfseries}{\normalfont}{}{}
\def\@setauthors{%
	\begingroup
	\def\thanks{\protect\thanks@warning}%
	\trivlist
	\centering\footnotesize \@topsep30\p@\relax
	\advance\@topsep by -\baselineskip
	\item\relax\normalsize 
	\author@andify\authors
	\def\\{\protect\linebreak}%
	\authors%
	\ifx\@empty\contribs
	\else
	,\penalty-3 \space \@setcontribs
	\@closetoccontribs
	\fi
	\endtrivlist
	\endgroup
}
\def\@settitle{\begin{center}%
		\baselineskip14\p@\relax
		\bfseries\large
		\@title
	\end{center}%
}
\begin{document}
	\title[On images of polynomial maps]{Images of polynomial maps and the Ax-Grothendieck theorem over algebraically closed division rings}
	\author[Elad Paran]{Elad Paran\textsuperscript{1,\dag}}
	\author[Tran Nam Son]{Tran Nam Son\textsuperscript{2,3,\ddag,*}}
	\keywords{Division ring;  Ax-Grothendieck Theorem; Images of Noncommutative polynomials}
	\subjclass[2020]{12E15; 14R99; 15A23; 16K40; 16S50.}	
	
	\maketitle
	
	\begin{center}
		{\small 
			*Corresponding author\\
			\textsuperscript{1}Department of Mathematics and Computer Science,\\ The Open University of Israel\\
			\textsuperscript{2}Faculty of Mathematics and Computer Science,\\ University of Science,\\ Ho Chi Minh City, Vietnam \\
			\textsuperscript{3}Vietnam National University,\\ Ho Chi Minh City, Vietnam\\
			\textsuperscript{\dag}paran@openu.ac.il,\\
			\textsuperscript{\ddag}trannamson1999@gmail.com
		}
	\end{center}
	
	\begin{abstract} \sloppy We study the images of polynomial maps over algebraically closed division rings. Our first result generalizes the classical Ax-Grothendieck theorem: We show that if \( f_1, \ldots, f_m \) are elements of the free associative algebra \( D\langle X_1, \ldots, X_m \rangle \) generated by \( m \geq 1 \) variables over an algebraically closed division ring \( D \) of finite dimension over its center \( F \), and if the induced map \( f = (f_1, \ldots, f_m) \colon D^m \to D^m \) is injective, then \( f \) must be surjective. With no condition on the dimension over the center, our second result is that \( p(D) = D \) if \( p \) is either an element in \( F\langle X_1, \ldots, X_m \rangle \) with zero constant term such that \( p(F) \neq \{0\} \), or a nonconstant polynomial in $F[x]$. Furthermore, we also establish some Waring type results. For instance,  for any integer \( n > 1 \), we prove that every matrix in \( \mathrm{M}_n(D) \) can be expressed as  a difference of pairs of multiplicative commutators of elements from \( p(\mathrm{M}_n(D)) \), provided again that \( D \) is finite-dimensional over \( F \).
	\end{abstract}

	\section{Introduction}
	
	The classical Ax-Grothendieck theorem, proven in the 1960s independently by Ax \cite{Pa_Ax_68} and Grothendieck \cite{Pa_Grothendieck_66}, states that any injective polynomial map from the $m$-dimensional complex space into itself must be surjective. This result holds more generally over any algebraically closed field.  A striking noncommutative counterpart was established by Augat in \cite{Pa_Au_19}, yielding a free version of the theorem for  polynomial maps in freely non-commuting variables. The proof involves a detailed study of the noncommutative Jacobian matrix. Further progress was made in \cite{Pa_Bea_21}, where the authors considered polynomial evaluations on matrix algebras whose entries lie in an algebraically closed division ring in the sense of Makar-Limanov \cite{Pa_Ma_85}. One of the interesting results of \cite{Pa_Bea_21} states that for any nonconstant multilinear polynomial, the evaluation map on matrix algebra is surjective. Since injectivity of an evaluation map already implies that the corresponding polynomial must be nonconstant, these surjectivity results also apply in that context. Overall, these results suggest a common theme: Algebraic structures that share properties with algebraically closed fields, such as algebraically closed division rings, often admit similar surjectivity phenomena for polynomial evaluation maps.
	
	The notion of an algebraically closed field can be generalized to division rings in various ways. A natural approach is introduced in \cite[Chapter 5, $\S$16, Page 255]{Bo_Lam_01}: A division ring is said to be \textit{algebraically closed} if every nonconstant polynomial with one-sided coefficients in a single variable over the ring has a right root within the ring. In this work, we adopt this definition; It is well known (see \cite[Theorem 1]{Pa_Ni_41}  or \cite[(16.14) Theorem (Niven, Jacobson)]{Bo_Lam_01}) that the real quaternions algebra $\mathbb{H}$ is the first example of a noncommutative algebraically closed division ring.  For further details on the characterization of algebraically closed division rings, we refer the reader to \cite[pp.~255--256]{Bo_Lam_01}.  Other generalizations have been introduced by Makar-Limanov \cite{Pa_Ma_85}, Dauns \cite{Bo_Da_82}, and Smith \cite{Pa_Smi_77}.

	In this work, we establish a noncommutative analogue of the Ax-Grothendieck theorem:  For an arbitrary division ring $D$, let $D\langle X_1, \ldots, X_m \rangle$ denote the free associative algebra generated by $m$ variables over $D$.  Every element of this algebra induces a polynomial map from $D^n$ to $D$. We prove the following generalization of the Ax-Grothendieck theorem (Theorem~\ref{ax}):  If an algebraically closed division ring $D$ is of finite dimension over its center, and for $f_1, \ldots, f_m \in {D}\langle X_1, \ldots, X_m \rangle$, if the map  \begin{eqnarray*}
		f &=& (f_1, \ldots, f_m) \colon D^m \to D^m,\\
		&&(a_1,\ldots,a_m)\mapsto \left(f_1(a_1,\ldots,a_m),\ldots,f_m(a_1,\ldots,a_m)\right)
	\end{eqnarray*}
	is injective, then it is surjective. 
	
	In order to prove this result, we first note that by a theorem of Baer (see \cite[(16.15) Theorem (Baer)]{Bo_Lam_01}), if $D$ is not a field then it is a quaternion algebra over a real-closed field $R$. We then interpret $f$ as a polynomial map from $R^{4m}$ to $R^{4m}$ using a theorem of Wilczynski, and then apply a version of the Ax-Grothendieck theorem for real-closed fields, due to Bialynicki-Birula and Rosenlicht. 
	
	Furthermore, we take a moment to explore polynomials in central variables, with particular attention to the single-variable case, while the behavior in several variables remains an open question. As a direct consequence of the definition of algebraically closed division rings, we establish a straightforward analogue in this setting (see Proposition~\ref{m=1}). This naturally raises the question of how large the image of a polynomial can be when \( D \) is not assumed to be algebraically closed. To address this, we prove in Theorem~\ref{one} that the image of a non-constant polynomial over an infinite division ring \( D \) must itself be infinite, provided that \( D \) contains infinitely many conjugacy classes. Additionally, we discuss the existence of such division rings and noncommutative division rings with only finitely many conjugacy classes.

	Our second main result is Theorem~\ref{p(F) neq 0}, which states that if \( D \) is an algebraically closed division ring with center \( F \), and \( p \in F\langle X_1, \ldots, X_m \rangle \) is a polynomial with zero constant term satisfying \( p(F) \neq \{0\} \), then \( p(D) = D \). Building on this result, we show that the image of \( p \) contains all diagonalizable matrices. Furthermore, every matrix over \( D \) can be expressed as a product of two diagonalizable matrices. As a direct consequence, for any \( n > 1 \), every matrix in \( \mathrm{M}_n(D) \) can be written as a product of two elements from \( p(\mathrm{M}_n(D)) \) (see Theorems~\ref{theorem1} and \ref{theorem2}). In addition, when \( D \) is finite-dimensional over \( F \), we establish that every matrix in \( \mathrm{SL}_n(D) \) can be expressed as a product of two multiplicative commutators from \( p(\mathrm{M}_n(D)) \) (see Theorem~\ref{SL}). This leads to the further conclusion, via Theorem~\ref{The}, that every matrix in \( \mathrm{M}_n(D) \) can be written as a difference of pairs of multiplicative commutators of elements in \( p(\mathrm{M}_n(D)) \), provided that the characteristic of $D$ is different from $2$. Moreover, when \( D \) has characteristic \( 0 \), we show that any matrix \( A \in \mathrm{M}_n(D) \) can be represented as a product of two additive commutators of elements in \( p(\mathrm{M}_n(D)) \), except in the case where \( A \) is invertible and \( n \) is odd. In this setting, we also establish that any nilpotent matrix \( A \in \mathrm{M}_n(D) \) can be written as an additive commutator of elements in \( p(\mathrm{M}_n(D)) \), while any trace-zero matrix \( A \in \mathrm{M}_n(D) \) can be expressed as either a sum or a difference of two additive commutators. Finally, in the finite-dimensional case, leveraging results from \cite{Pa_Pa_23}, we provide an explicit characterization of \( p(\mathrm{M}_n(D)) \) (see Theorem~\ref{des}).

	For convenience, we introduce some notations. Let \( D \) be a division ring with center \( F \), and let \( n \) be a positive integer. We denote by \( \mathrm{M}_n(D) \) the ring of \( n \times n \) matrices over \( D \) and by \( \mathrm{GL}_n(D) \) the general linear group of invertible matrices in \( \mathrm{M}_n(D) \). The sets \( \mathrm{LT}_n(D) \) and \( \mathrm{UT}_n(D) \) refer to lower and upper triangular matrices in \( \mathrm{M}_n(D) \) with diagonal entries equal to $1$, respectively. For \( a_1, \ldots, a_n \in D \), the symbol \( \mathrm{diag}(a_1, \ldots, a_n) \) denotes the diagonal matrix with these entries. The identity matrix in \( \mathrm{M}_n(D) \) is \( \mathrm{I}_n \), and for any \( \lambda \in D \), the symbol \( \lambda \mathrm{I}_n = \mathrm{diag}(\lambda, \ldots, \lambda) \). Finally, \( \mathrm{SL}_n(D) \) represents the commutator subgroup of \( \mathrm{GL}_n(D) \), which coincides with the set of matrices in \( \mathrm{GL}_n(D) \) with a Dieudonné determinant of \( \overline{1} \), except when \( n = 2 \) and \( D \) is the field with two elements. For more details on the Dieudonné determinant, see~\cite{Pa_Li_21}.  
	
	This paper is organized as follows: In Section~\ref{Ax-Gro}, we prove the mentioned generalization of the Ax-Grothendieck theorem. Section~\ref{section al} is devoted to the images of noncommutative polynomials on algebraically closed division rings, where we discuss the multi-linear case, which is motivated by the well-known Lvov-Kaplansky conjecture. Lastly, Section~\ref{section ma} turns to matrix evaluations.

	\section{A noncommutative Ax-Grothendieck theorem}\label{Ax-Gro}
	
	\subsection{The single variable case}
	
	The classical Ax-Grothendieck theorem states that a polynomial map $$f = (f_1,\ldots,f_m) \colon \C^m \to \C^m$$ (here $f_1,\ldots,f_m \in \C[x_1,\ldots,x_m]$ and $m$ is a positive integer) which is injective is also surjective. The theorem holds over $\C$, and more generally, over any algebraically closed field. For the case of a single {\bf central} variable, one has the following trivial analogue for algebraically closed division rings:

	\begin{proposition}\label{m=1}
		Let $D$ be an algebraically closed division ring, and let $D[x]$ denote the ring of polynomials in a central variable  $x$ over $D$. If a polynomial $f \in D[x]$ is injective as a function from $D$ to $D$, then it is surjective.
	\end{proposition}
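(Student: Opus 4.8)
The plan is to observe that the injectivity hypothesis is used only to force $f$ to be nonconstant, after which surjectivity is immediate from the very definition of an algebraically closed division ring adopted in this paper. First I would record that in any division ring $0 \neq 1$, so $D$ has at least two elements; hence a constant polynomial $f \equiv c$ induces a constant — in particular non-injective — map $D \to D$. Therefore the hypothesis that $f$ is injective forces $\deg f = n \geq 1$.

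Next, fix an arbitrary $b \in D$; the goal is to produce $a \in D$ with $f(a) = b$. Writing $f = a_n x^n + \cdots + a_1 x + a_0$ with $a_n \neq 0$ and $n \geq 1$ (which is legitimate since $x$ is central, so all coefficients may be placed on the left), consider $g(x) := f(x) - b = a_n x^n + \cdots + a_1 x + (a_0 - b) \in D[x]$. Subtracting the constant $b$ does not affect the leading term, so $g$ is again a nonconstant polynomial with one-sided (left) coefficients in $D$. By the definition of an algebraically closed division ring used here, $g$ has a right root $a \in D$, that is, $a_n a^n + \cdots + a_1 a + (a_0 - b) = 0$, which rearranges to $f(a) = b$. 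Since $b$ was arbitrary, $f(D) = D$.

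The point that needs care is not an obstacle so much as bookkeeping: one must keep the evaluation convention consistent — left coefficients paired with right roots — and note that passing from $f$ to $f - b$ preserves nonconstancy, which is clear since only the constant term changes. In fact the argument shows slightly more than is stated, namely that every nonconstant $f \in D[x]$ in a central variable is already surjective on an algebraically closed $D$; injectivity enters only to exclude the degenerate constant case. This is precisely why the statement deserves the label \emph{trivial} analogue of the Ax--Grothendieck theorem, and why the genuinely substantive content will appear in the multivariable noncommutative setting treated in Theorem~\ref{ax}.
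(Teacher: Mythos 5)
Your proposal is correct and follows essentially the same route as the paper: injectivity rules out the constant case, and then for each target $b$ the nonconstant polynomial $f-b$ has a root by the definition of an algebraically closed division ring. The extra bookkeeping about left coefficients and right roots is fine but not needed beyond what the paper's own two-line argument already contains.
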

	
	\begin{proof}
		If $f$ is injective, then of course $f$ is not a constant. Let $a \in D$. Then $f-a$ is also not a constant, hence it has a zero $b \in D$, hence $f(b) = a$. Thus $f$ is surjective.
	\end{proof}

	Following the proof of Proposition~\ref{m=1}, one can see that the image of any nonconstant polynomial in \( D[x] \) must be all of \( D \). This naturally leads to the question of how large the image can be without assuming that \( D \) is algebraically closed. To explore this further, we provide the following answer, emphasizing division rings with infinitely many conjugacy classes. In this context, two elements \( a, b \in D \) are  said to be \textit{conjugate} if there exists \( g \in D \setminus \{0\} \) such that \( a = g b g^{-1} \).  As is customary, this defines an equivalence relation that partitions \( D \) into distinct conjugacy classes.
	
	\begin{theorem}\label{one}
		Let $D$ be an infinite division ring and let $p\in D[x]$ be a nonconstant polynomial. If $D$ contains infinitely many conjugacy classes, then the image of $p$ evaluated on $D$ must be infinite.
	\end{theorem}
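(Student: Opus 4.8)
The plan is to argue by contradiction: suppose $p$ has finite image $\{c_1, \ldots, c_k\} \subseteq D$. The key observation is that for a polynomial $p \in D[x]$ of degree $d$, the equation $p(x) = c$ can have only finitely many \emph{conjugacy classes} of solutions, even though in a noncommutative division ring a single such equation may have infinitely many solutions (an entire conjugacy class). This is the classical fact, going back to Gordon--Motzkin, that a polynomial of degree $d$ over a division ring has roots lying in at most $d$ conjugacy classes. Granting this, if the image of $p$ were the finite set $\{c_1, \ldots, c_k\}$, then $D = \bigcup_{i=1}^k p^{-1}(c_i) = \bigcup_{i=1}^k \{x : (p - c_i)(x) = 0\}$, and each fiber $p^{-1}(c_i)$ is contained in a union of at most $d$ conjugacy classes. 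Hence $D$ would be covered by at most $kd$ conjugacy classes, contradicting the hypothesis that $D$ has infinitely many conjugacy classes.

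Concretely, the steps would be: (1) assume $p(D) = \{c_1, \ldots, c_k\}$ is finite; (2) for each $i$, observe that $p - c_i \in D[x]$ is a nonconstant polynomial of degree $d = \deg p$, and invoke the Gordon--Motzkin theorem (or its division-ring generalization, e.g.\ \cite[(16.3)--(16.4)]{Bo_Lam_01}) to conclude that the zero set of $p - c_i$ meets at most $d$ conjugacy classes of $D$; (3) conclude $D$ is a union of at most $kd$ conjugacy classes; (4) derive the contradiction. The hypothesis that $D$ is infinite is used only to ensure that "image is infinite" is not vacuous — actually it is needed to make the statement nontrivial, and it is automatically consistent with "infinitely many conjugacy classes."

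The main subtlety — and the step I would be most careful about — is the precise form of the Gordon--Motzkin bound in a general (not necessarily finite-dimensional) division ring. The sharp statement is that the roots of a degree-$d$ polynomial lie in at most $d$ conjugacy classes; one must make sure the version quoted applies to arbitrary $D$ and to the polynomial $p - c_i$ with its coefficients on a fixed side (say, right coefficients), matching the convention under which "image of $p$" is defined. If one only has the weaker statement that there are finitely many conjugacy classes of roots (without the explicit bound $d$), that still suffices for the argument, since a finite union of finite sets of conjugacy classes is still finite. An alternative, should the Gordon--Motzkin machinery be deemed too heavy, is to argue directly: fix any $c$ in the image; if $a$ is a root of $p - c$ and $a'$ is conjugate to $a$, then $a'$ need not be a root, but within a single conjugacy class the set of roots of a fixed polynomial is controlled by a single polynomial equation over the center of the centralizer — this again reduces to a finiteness statement. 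Either route closes the argument; the Gordon--Motzkin citation is the cleanest.
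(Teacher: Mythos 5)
Your proposal is correct and follows essentially the same route as the paper: both reduce to the Gordon--Motzkin theorem that the roots of a degree-$d$ polynomial over a division ring lie in at most $d$ conjugacy classes, applied to each $p - c_i$. The only cosmetic difference is that you count a total bound of $kd$ conjugacy classes covering $D$, whereas the paper uses a pigeonhole step to exhibit a single fiber meeting infinitely many classes; the substance is identical.
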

	
	\begin{proof}
		Assume, for the sake of contradiction, that the image of \( p \) is finite.  Then, there must be some element \( a \) in the image for which we can find  elements \( b_1, b_2, \dots \in D \) lying in infinitely many distinct conjugacy  classes, all satisfying \( p(b_i) = a \). This, however, would mean that the  nonconstant polynomial \( p - a \) has roots in infinitely many conjugacy  
		classes, contradicting a classical result of Gordon and Motzkin  
		\cite[Theorem 2]{Pa_GoMo_65}, which states that at most \( t \) conjugacy  classes of \( D \) can contain roots of \( p - a \), where \( t \) is its degree.
	\end{proof}
	
	In Theorem~\ref{one}, it is essential to assume that \( D \) is infinite; otherwise,  if \( D \) is finite, it would necessarily be a finite field (see \cite[(13.1) Wedderburn's ``Little" Theorem]{Bo_Lam_01}),  implying that the image of \( p \) must also be finite. The existence of such a division ring $D$ is guaranteed, as division rings with infinite center  do exist, as stated in \cite[$\S4$, Example 3]{Pa_ChuLee_79} or \cite[Proposition 2.3.5]{Bo_Cohn_95}. Here, the infiniteness condition is essential to ensure the existence of infinitely many conjugacy classes. 
	
	As a concrete example, consider the skew Laurent ring \( D = K((x, \sigma)) \), where \( \sigma \) is an automorphism of infinite order of an infinite field \( K \). This ring has infinite dimension over its center and is not algebraically closed; Indeed, \( x \) has no square root in \( D \). With a slight adjustment, one can confirm that the center of \( D \) is infinite, which in turn implies that \( D \) contains infinitely many conjugacy classes.  
	
	Another notable example arises from Mal’cev–Neumann division rings constructed over fields using noncyclic free groups. These rings are also not algebraically closed, as no element of a free group can be expressed as a square. Moreover, free groups provide an abundance of elements that generate infinitely many conjugacy classes, specifically through powers of a chosen generator.  Both of these observations can be verified directly by considering the abelianization of the free group \( F_n = \langle y_1, \ldots, y_n \mid \cdot \rangle \), which leads to the natural quotient  
	\[
	F_n \to F_n / F_n' \cong \mathbb{Z}^n,
	\]  
	where each generator \( y_i \) maps to the standard basis vector \( e_i = (0, \dots, 0, 1, 0, \dots, 0) \) in \( \mathbb{Z}^n \). Here, \( F_n' \) denotes the commutator subgroup of \( F_n \).

	However, there do exist infinite division rings with only finitely many conjugacy  classes. For polynomials over such rings, we suspect that the image might be  finite, though we are not certain, leaving this as an open problem. The existence of infinite division rings with only finitely many conjugacy  classes is discussed in \cite{Pa_AnGu_24}, where it is noted that Cohn's  construction from \cite[Theorem 6.3]{Pa_Cohn_71} can be applied to obtain  an infinite division ring of characteristic \( p \), whose multiplicative  group has precisely \( p \) conjugacy classes.

	\subsection{The multivariable case}

	Consider a division ring $D$, and let $D \langle X_1,\ldots, X_m\rangle$ be the free algebra in $n$ variables over $D$. Via substitution, each element of this ring induces a {\it polynomial function} from $D^m$ to $D$. That is, a function that can be represented using only the variables, constants from $D$, addition, and multiplication. It should be noted that distinct elements of $D \langle X_1,\ldots,X_m\rangle$ may induce the same function, even for $m = 1$ and for an infinite $D$ (see for example in \cite[\S1]{AP19}). For such general polynomial functions, we have the following generalization of the Ax-Grothendieck theorem:
	\begin{theorem}[Ax-Grothendieck theorem over division rings]\label{ax}\sloppy
		Let $D$ be a centrally-finite algebraically closed division ring and let $m \geq 1$. Suppose that $f_1,\ldots,f_m \in D \langle X_1,\ldots,X_m\rangle $. If the map \begin{eqnarray*}
			f &=& (f_1, \ldots, f_m) \colon D^m \to D^m,\\
			&&(a_1,\ldots,a_m)\mapsto \left(f_1(a_1,\ldots,a_m),\ldots,f_m(a_1,\ldots,a_m)\right)
		\end{eqnarray*} is injective, then it is surjective. 
	\end{theorem}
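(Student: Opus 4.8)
The plan is to reduce the statement to two known instances of the Ax-Grothendieck phenomenon --- over algebraically closed fields, and over real-closed fields (the latter due to Bialynicki-Birula and Rosenlicht) --- linked by Baer's structure theorem for algebraically closed division rings and by a coordinatization of $f$. I would split the argument according to whether $D$ is commutative.

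If $D$ is commutative, it is an algebraically closed field, and I would simply observe that, although $D\langle X_1,\ldots,X_m\rangle$ is noncommutative in the variables, the function induced on $D^m$ by each $f_i$ is an ordinary polynomial function --- the arguments $a_1,\ldots,a_m$ commute with one another and with all coefficients from $D$ --- so $f$ is a polynomial self-map of $D^m$ over an algebraically closed field, and the classical Ax-Grothendieck theorem applies directly.

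In the noncommutative case, I would first invoke Baer's theorem \cite[(16.15) Theorem (Baer)]{Bo_Lam_01}: $D$ is a quaternion division algebra over a real-closed field $R=Z(D)$, so $\dim_R D=4$, and fixing an $R$-basis of $D$ identifies $D$ with $R^4$ and $D^m$ with $R^{4m}$ as $R$-vector spaces. Next I would appeal to a theorem of Wilczynski to the effect that, under this identification, the function $D^m\to D$ induced by any element of $D\langle X_1,\ldots,X_m\rangle$ is given by a $4$-tuple of polynomials in $4m$ variables over $R$; concretely, in coordinates addition in $D$ is $R$-linear and multiplication in $D$ is $R$-bilinear, so each word in $X_1,\ldots,X_m$ with coefficients in $D$ evaluates to a polynomial map $R^{4m}\to R^4$, and finite $D$-linear combinations of such maps remain polynomial. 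Thus $f$ corresponds to a polynomial self-map $\widetilde f\colon R^{4m}\to R^{4m}$.

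Finally, since the coordinate identification $D^m\cong R^{4m}$ is a bijection conjugating $f$ to $\widetilde f$, the map $f$ is injective (resp.\ surjective) if and only if $\widetilde f$ is; so from injectivity of $f$ I would deduce that $\widetilde f$ is an injective polynomial self-map of $R^{4m}$, apply the Ax-Grothendieck theorem over real-closed fields to obtain surjectivity of $\widetilde f$, and conclude that $f$ is surjective, as desired. The step I expect to require the most care is the coordinatization (Wilczynski's theorem): one must check that the induced function is genuinely polynomial over $R$ and that this is independent of the representative of $f_i$ chosen in the free algebra --- which it is, since $\widetilde f$ is nothing but $f$ expressed in coordinates --- whereas the remaining ingredients, Baer's structure theorem and the real-closed Ax-Grothendieck theorem, enter as black boxes.
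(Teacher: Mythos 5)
Your proposal is correct and follows essentially the same route as the paper: split on whether $D$ is a field, apply the classical Ax--Grothendieck theorem in the commutative case, and otherwise use Baer's theorem to identify $D$ with a quaternion algebra over a real-closed field $R$, coordinatize $f$ as a polynomial self-map of $R^{4m}$ (the paper cites Wilczynski and Alon--Paran for this step, where you give the direct bilinearity argument), and conclude via the Bialynicki-Birula--Rosenlicht theorem over real-closed fields.
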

	
	\begin{proof}
		\sloppy		If $D$ is a field, then the result is the classical Ax-Grothendieck theorem. Suppose that $D$ is not a field. Then, by a theorem of Baer \cite[(16.15) Theorem (Baer)]{Bo_Lam_01}, $D$ is a quaternion algebra over a real-closed field $R$, with standard basis $\{1,i,j,k\}$ such that $$i^2 = j^2 = k^2 = 1, \quad ij = -ji.$$ 
		
		We view $f_1,\ldots,f_m$ as functions from $R^{4m} \to R^4$ as follows: We represent each variable $X_\ell$ (with $1 \leq \ell \leq m$) as $X_\ell = y_{\ell,1}+y_{\ell,2}i+y_{\ell,3}j+y_{\ell,4}k$, where $y_{\ell,1},y_{\ell,2},y_{\ell,3},y_{\ell,4}$ represent variables taking values in $R$; For any $$\big((a_{11},a_{12},a_{13},a_{14}), \ldots,(a_{m1},a_{m2},a_{m3},a_{m4})) \in R^{4m},$$ we write the value of $$f_\ell\big(a_{11}+a_{12}i+a_{13}j+a_{14}k,\ldots,a_{m1}+a_{m2}i+a_{m3}j+a_{m4}k\big)$$
		as \begin{eqnarray*}
			&&f_{\ell,1}\big((a_{11},a_{12},a_{13},a_{14}), \ldots,(a_{m1},a_{m2},a_{m3},a_{m4}))\\
			&+&f_{\ell,2}\big((a_{11},a_{12},a_{13},a_{14}), \ldots,(a_{m1},a_{m2},a_{m3},a_{m4}))i\\
			&+&f_{\ell,3}\big((a_{11},a_{12},a_{13},a_{14}), \ldots,(a_{m1},a_{m2},a_{m3},a_{m4}))j\\
			&+&f_{\ell,4}\big((a_{11},a_{12},a_{13},a_{14}), \ldots,(a_{m1},a_{m2},a_{m3},a_{m4}))k,
		\end{eqnarray*} where $f_{\ell,1},f_{\ell,2},f_{\ell,3},f_{\ell,4}$ are functions from $R^{4m}$ to $R$. Thus the function $$f \colon D^m \to D^m$$ is injective (resp. surjective) if and only if the function $$f_R = \big((f_{1,1},f_{1,2},f_{1,3},f_{1,4},\ldots,f_{m,1},f_{m,2},f_{m,3},f_{m,4})\big)$$ from $R^{4m}$ to $R^{4m}$ is injective (resp. surjective). We now note the following non-obvious fact: Each of the functions $f_{\ell,1},f_{\ell,2},f_{\ell,3},f_{\ell,4}$ is itself a {\bf polynomial} in the ring $$R[y_{1,1},y_{2,2},y_{3,3},y_{4,4},\ldots,y_{m,1},y_{m,2},y_{m,3},y_{m,4}].$$ This is proven in \cite[Theorem 4.1]{Wil14} and also in \cite[Corollary 4]{AP19}. 
		
		We can now invoke a theorem of Bailynicki-Birula and Rosenlicht \cite{BBR62}, which states that the Ax-Grothendieck theorem holds for polynomials over $R$ (Bailynicki-Birula and Rosenlicht phrase their result in the case where $R$ is the field of real numbers, but their result holds for any real-closed field, see \cite[Theorem 11.4.2]{Bochnak}). Thus, if $f$ is injective, so is $f_R$, hence by the theorem of Bailynicki-Birula and Rosenlicht, $f_R$ is surjective, hence so is $f$. 
	\end{proof}
	
	Let us note the following open questions: Does the above theorem hold for an arbitrary algebraically closed division ring $D$? In particular, does it hold in the case where $D$ is the Makar-Limanov algebraically closed division ring?

	\section{Algebraically closed division rings}\label{section al}
	
	In this section, we describe the images of noncommutative polynomials evaluated on algebraically closed division rings. The result is stated as follows.
	
	\begin{theorem}\label{p(F) neq 0}
		Let $D$ be an algebraically closed division ring with center $F$ and let \( p \) be either an element in \( F\langle X_1, \ldots, X_m \rangle \) with zero constant term such that \( p(F) \neq \{0\} \), or a nonconstant polynomial in $F[x]$. Then, $p(D)=D$.
	\end{theorem}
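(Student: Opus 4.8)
The plan is to split the statement into its two cases and reduce each to the single central-variable situation already handled by Proposition~\ref{m=1} and the remark after it, which records that the image of any nonconstant element of $D[x]$ is all of $D$. The case $p \in F[x]$ nonconstant needs no work: since $F$ is the center of $D$ we have $F[x] \subseteq D[x]$, and a nonconstant element of $F[x]$ remains nonconstant in $D[x]$ (its leading coefficient is a nonzero element of $D$), so the remark gives $p(D) = D$ at once.

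For the remaining case --- $p \in F\langle X_1,\ldots,X_m\rangle$ with zero constant term and $p(F) \neq \{0\}$ --- the idea is to collapse the $m$ free variables onto one central variable without killing $p$. First I would choose $\alpha = (\alpha_1,\ldots,\alpha_m) \in F^m$ with $c := p(\alpha) \neq 0$; since the coefficients of $p$ lie in $F$ and the $\alpha_i$ are central, $c \in F \setminus \{0\}$. Introducing a fresh central variable $t$, I would apply the $F$-algebra homomorphism $F\langle X_1,\ldots,X_m\rangle \to F[t]$, $X_i \mapsto \alpha_i t$, to $p$ and call the image $g \in F[t]$. Specializing $t = 0$ gives $g(0) = 0$, because $p$ has zero constant term; specializing $t = 1$ gives $g(1) = p(\alpha) = c \neq 0$. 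Hence $g$ is a nonconstant polynomial in a single central variable over $F$, and therefore over $D$, so by the case already treated (equivalently, by the remark after Proposition~\ref{m=1}) we get $g(D) = D$. Then for any $a \in D$ there is $b \in D$ with $g(b) = a$, i.e.\ $p(\alpha_1 b,\ldots,\alpha_m b) = a$ with $(\alpha_1 b,\ldots,\alpha_m b) \in D^m$, and so $a \in p(D)$. Thus $p(D) = D$.

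I do not anticipate a genuine obstacle here; the real content is simply the observation that scaling all the variables by one common central parameter turns $p$ into an honest one-variable polynomial whose image is already understood. The only delicate point is that both hypotheses on $p$ are genuinely used, and they enter precisely in making $g$ nonconstant: ``zero constant term'' yields $g(0) = 0$, and $p(F) \neq \{0\}$ supplies a point where $g$ does not vanish. Neither can be dropped --- for instance $p = 1 + X_1 X_2 - X_2 X_1$ satisfies $p(F) = \{1\} \neq \{0\}$, yet every value of $p$ on the real quaternions $\HH$ has real part $1$ (the real part of an additive commutator vanishes), so $p(\HH) \neq \HH$.
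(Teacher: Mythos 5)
Your proof is correct, and both arguments share the same overall strategy of reducing the multivariable case to a single central variable and then invoking algebraic closedness as in Proposition~\ref{m=1}; the difference lies in the reduction itself. The paper freezes all but one variable, setting $f(x) = p(x, a_2, \ldots, a_m)$ for a tuple $(a_1,\ldots,a_m) \in F^m$ with $p(a_1,\ldots,a_m) \neq 0$; this only immediately yields that $f$ is a \emph{nonzero} element of $D[x]$, and one must still rule out that $f$ is a nonzero constant (e.g.\ $p = X_1X_2 + X_3$ with $a_2 = 0$, $a_3 = 1$ gives $f \equiv 1$ even though $\deg_{X_1} p = 1$), a point the paper's wording passes over rather quickly. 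Your diagonal substitution $X_i \mapsto \alpha_i t$ avoids this entirely: the two hypotheses on $p$ translate directly into $g(0) = 0$ and $g(1) \neq 0$, which force $g$ to be genuinely nonconstant with no case analysis and no appeal to the infinitude of $D$ or of $F$ at this step (relevant, since the paper later notes that algebraically closed division rings with finite center may exist). The identification $g(b) = p(\alpha_1 b, \ldots, \alpha_m b)$ is justified exactly as you say, by comparing the two $F$-algebra homomorphisms out of the free algebra on their generators. Your closing example $p = 1 + X_1X_2 - X_2X_1$ on $\HH$ is also valid (the reduced trace of a multiplicative-free additive commutator vanishes), and usefully shows the zero-constant-term hypothesis cannot be dropped.
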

	
	\begin{proof}
		The case where $p$ is a nonconstant polynomial in $F[x]$ can be treated similarly to the proof of Proposition~\ref{m=1}. We now turn to the remaining case. The proof is motivated from \cite{Pa_Wa_21}. Since \( p \) is a polynomial with zero constant term, it follows that \( p(0, \ldots, 0) = 0 \). Suppose \( p(F) \neq \{0\} \). Then, there exist \( a_1, \ldots, a_m \in F \) such that \( p(a_1, \ldots, a_m) \neq 0 \). This implies there is an index \( i \in \{1, \ldots, m\} \) for which the degree of \( x_i \) in \( p \) is nonzero; otherwise, all degrees of \( x_1, \ldots, x_m \) would be zero, leading to the contradiction \( 0 \neq p(a_1, \ldots, a_m) = p(0, \ldots, 0) = 0 \).
		
		Without loss of generality, assume the degree of \( x_1 \) in \( p \) is nonzero. Define \( f(x) = p(x, a_2, \ldots, a_m) \). Since \( f(a_1) = p(a_1, a_2, \ldots, a_m) \neq 0 \), it follows that \( f(x) \) is nonzero. For any \( c \in D \), since in an infinite division ring, there is no nonzero polynomial in $D[x]$ can vanish identically on $D$ (as established in \cite[(16.7) Theorem, pp. 251]{Bo_Lam_01}), it follows that the polynomial \( f(x) - c \) remains nonconstant in \( D[x] \). Since \( D \) is algebraically closed, Theorem~\ref{m=1} implies that \( f(D) = D \), which in turn shows that \( p(D) = D \). This completes the proof.
	\end{proof}
	
	Regarding Theorem~\ref{p(F) neq 0} in the multilinear case which is related to the famous Lvov-Kaplansky conjecture (see \cite{Pa_Ka_20}), if \( p \) is a multilinear polynomial, expressed as
	\[
	p = \sum_{\sigma \in S_m} \lambda_\sigma x_{\sigma(1)} \cdots x_{\sigma(m)},
	\]
	where each \( \lambda_\sigma \in F \) for \( \sigma \) in the symmetric group \( S_m \) of degree \( m \), and \( p(F) \neq 0 \), then it follows that \( \sum_{\sigma \in S_m} \lambda_\sigma \neq 0 \). Let \( \lambda = \sum_{\sigma \in S_m} \lambda_\sigma \). If \( A \) is an unital and associative algebra over \( F \), then for all \( a \in A \), we have $a = p(\lambda^{-1}a, 1, \ldots, 1),$
	meaning that \( p(A) = A \). Therefore, assuming \( p(F) \neq \{0\} \), it follows that not only \( p(D) = D \) but also \( p(A) = A \) for all \( F \)-algebras \( A \), rendering this case trivial in the given context. In the case where \( p(F) = \{0\} \), there is a deeper understanding of the situation, which will be addressed in a later section, drawing on ideas and results from \cite{Pa_Pa_23, Pa_Wa_21, Pa_Wa_22}.

	Theorem~\ref{p(F) neq 0} shows that there are two possible cases: either \( p(F) = \{0\} \) or \( p(D) = D \). This behavior also occurs in the context of the determinant map when \( D = F \). For an algebraically closed field \( F \), it is shown in \cite[Proposition 4.4]{Pa_BreVo_24} that if \( p \) is a nonconstant noncommutative polynomial over \( F \), then  $$\det(p(\mathrm{M}_n(F))) = F$$ for all positive integers \( n \). The proof relies on utilizing the universal division \( F \)-algebra of degree \( n \).  Motivated by Theorem~\ref{p(F) neq 0} and focusing on the determinant map, it is natural to consider the behavior of \( \det(p(\mathrm{M}_n(F))) = \{0\} \) for each positive integer \( n \).  Following a similar approach related to the universal division algebra, it is shown in \cite[Proposition 2.1]{Pa_KaVi_12} that   for any integer \( n > 1 \), if \( F \) is a field (not necessarily algebraically closed and possibly of arbitrary characteristic) and \( p \) is a noncommutative polynomial over \( F \), then the condition \(\det(p(\mathrm{M}_n(F))) = \{0\}\) leads to the conclusion that \( p = 0 \).
	
	It is unrealistic to expect that many cases exist where \( \det(p(A)) = 0 \) for some tuple \( A \) implies \( p(A) = 0 \). While counterexamples are easily found in the context of a single variable, the situation remains less clear for two variables. For instance, if $A = \begin{pmatrix} 0 & 1 \\ 0 & 0 \end{pmatrix}$ and \( p(x) = x \), then \( p(A) = 0 \), but \( A \neq 0 \). Thus, additional conditions are required, which are discussed in detail in Yaghoub Sharifi's blog. For the reader’s convenience, we provide a brief overview here. 
	
	First, if \( p(x) \in \mathbb{R}[x] \) has no real root and \( A \in \mathrm{M}_2(\mathbb{R}) \), then 
	$\det (p(A)) = 0$ implies $p(A) = 0$. Indeed, since \( p(x) \in \mathbb{R}[x] \), every root of \( p(x) \) must appear with its complex conjugate. Thus, we can express \( p(x) \) in the form
	\[
	p(x) = a \prod_{k=1}^m (x - z_k)(x - \overline{z}_k),
	\]
	where \( a \neq 0 \) and each \( z_k \in \mathbb{C} \setminus \mathbb{R} \) and $\overline{z_k}$ stands for the complex conjugate of $z_k$. This implies that 
	\[
	p(A) = a \prod_{k=1}^m (A - z_k \mathrm{I}_2)(A - \overline{z}_k \mathrm{I}_2), 
	\]
	and from this, we have \begin{eqnarray*}
		0 &=& \det (p(A)) \\
		&=& a^2 \prod_{k=1}^m \det(A - z_k \mathrm{I}_2) \det(A - \overline{z}_k \mathrm{I}_2) \\ 
		&=& a^2 \prod_{k=1}^m \det(A - z_k \mathrm{I}_2) \overline{\det(A - z_k \mathrm{I}_2)}.
	\end{eqnarray*}Thus, there must be some \( j \leq m \) for which \( \det(A - z_j I) = 0 \), meaning \( z_j \) is an eigenvalue of \( A \). Since \( A \in \mathrm{M}_2(\mathbb{R}) \), its other eigenvalue must be \( \overline{z}_j \), and because \( p(x) \) has no real roots, \( z_j \neq \overline{z}_j \). By the Cayley-Hamilton theorem, it follows that \( (A - z_j \mathrm{I}_2)(A - \overline{z}_j \mathrm{I}_2) = 0 \), and consequently, \( p(A) = 0 \).
	
	Furthermore, the implication that \(\det p(A) = 0\) leads to \(p(A) = 0\) does not hold  for \(A \in \mathrm{M}_2(\mathbb{C})\) or \(A \in \mathrm{M}_n(\mathbb{R})\) with \(n \geq 3\). To illustrate this, consider the polynomial \( p(x) = x^2 + 1 \in \mathbb{R}[x] \) and let \( A = i e_{11} \in \mathrm{M}_2(\mathbb{C}) \), where \( i = \sqrt{-1} \). Here, for positive integers \( i \) and \( j \), the matrix \( e_{ij} \) is defined such that its \((i,j)\)-entry is \( 1 \) and all other entries are \( 0 \).  We find that $p(A) = A^2 + \mathrm{I}_2 = e_{22},$
	which leads to \(\det(p(A)) = 0\) while \(p(A) \neq 0\). 
	
	Now, for \(n \geq 3\), take \(A = e_{1n} - e_{n1} \in \mathrm{M}_n(\mathbb{R})\). In this case, we have
	\[
	p(A) = A^2 + \mathrm{I}_n = (e_{1n} - e_{n1})^2 + \mathrm{I}_n = \mathrm{I}_n - e_{11} - e_{nn} = \sum_{k=2}^{n-1} e_{kk} \neq 0,
	\]
	yet it is clear that \(\det(p(A)) = 0\).
	
	Consider the scenario where \( p(x) \in \mathbb{R}[x] \) has a real root. In this case, for every integer \( n \geq 2 \), there are infinitely many matrices \( A \in \mathrm{M}_n(\mathbb{R}) \) for which \( \det(p(A))=0 \) while \( p(A) \neq 0 \). Indeed, let \( \alpha, \beta \in \mathbb{R} \) satisfy \( p(\alpha) = 0 \) and \( p(\beta) \neq 0 \). We can construct the block diagonal matrix $$A=\mathrm{diag}(\alpha,\underbrace{\beta,\ldots,\beta}_{n-1\text{ times}})\in\mathrm{M}_n(\mathbb{R}).$$ Then, we find that
	\[
	p(A) = \mathrm{diag}(p(\alpha),\underbrace{p(\beta),\ldots,p(\beta)}_{n-1\text{ times}}) = \mathrm{diag}(0,\underbrace{p(\beta),\ldots,p(\beta)}_{n-1\text{ times}}).
	\]
	Thus, it follows that \( \det(p(A)) = 0 \), yet \( p(A) \neq 0 \).
	
	\section{Matrix evaluations of noncommutative polynomials}\label{section ma}
	
	\subsection{Diagonalizable matrices}
	
	Following Theorem~\ref{p(F) neq 0} again, we know that $p(D) = D$ where \( p \) is either an element in \( F\langle X_1, \ldots, X_m \rangle \) with zero constant term such that \( p(F) \neq \{0\} \), or a nonconstant polynomial in $F[x]$. Hence, the image of $p$ evaluated on matrices over $D$ contains all diagonalizable matrices over $D$. 
	
	To see this claim, note that the image of $p$ is invariant under similarity transformations. Therefore, it suffices to show that the image of $p$ contains all diagonal matrices over $D$. Now, let $A = \mathrm{diag}(a_1, a_2, \ldots, a_n)$, where $n$ is a positive integer and $a_1, a_2, \ldots, a_n \in D$. Since $p(D) = D$, each $a_i$ can be expressed as $a_i = p(b_{1,i}, \ldots, b_{m,i})$ for some $b_{1,i}, \ldots, b_{m,i} \in D$. Setting $B_i = \mathrm{diag}(b_{i,1}, b_{i,2}, \ldots, b_{i,m})$ for $i \in \{1, 2, \ldots, n\}$. It then follows that $A = p(B_1, \ldots, B_m)$, as required.
	
	Therefore, we obtain the following corollary.
	
	\begin{corollary}\label{one variable}
		Let $D$ be an algebraically closed division ring with center $F$ and let $n$ be an integer greater than $1$. If \( p \) is either an element in \( F\langle X_1, \ldots, X_m \rangle \) with zero constant term such that \( p(F) \neq \{0\} \), or a nonconstant polynomial in $F[x]$,  then 	$p(\mathrm{M}_n(D))$ contains all diagonalizable matrices over $D$.
	\end{corollary}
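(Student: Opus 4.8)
The plan is to reduce the statement to the identity $p(D) = D$ established in Theorem~\ref{p(F) neq 0}, using two elementary facts about matrix evaluation: its invariance under conjugation, and its compatibility with the diagonal embedding of $D$-tuples into $\mathrm{M}_n(D)$.

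First I would note that $p(\mathrm{M}_n(D))$ is stable under similarity. Indeed, conjugation by any $Q \in \mathrm{GL}_n(D)$ is an $F$-algebra automorphism of $\mathrm{M}_n(D)$, and polynomial evaluation commutes with $F$-algebra homomorphisms; hence if $A = p(B_1,\ldots,B_m)$ then $QAQ^{-1} = p(QB_1Q^{-1},\ldots,QB_mQ^{-1})$ again lies in the image. Since every diagonalizable matrix over $D$ is by definition conjugate to a diagonal one, it therefore suffices to prove that every diagonal matrix belongs to $p(\mathrm{M}_n(D))$.

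Next, fix $A = \mathrm{diag}(a_1,\ldots,a_n)$ with $a_i \in D$. By Theorem~\ref{p(F) neq 0} we may write each $a_i = p(b_{1,i},\ldots,b_{m,i})$ with $b_{k,i} \in D$, and I would then set $B_k = \mathrm{diag}(b_{k,1},\ldots,b_{k,n})$ for $k = 1,\ldots,m$. The crucial observation is that the algebra of diagonal matrices over $D$ is $F$-algebra isomorphic to the product $D \times \cdots \times D$ ($n$ factors), so that any noncommutative polynomial over $F$ evaluated at diagonal arguments is computed coordinatewise; consequently $p(B_1,\ldots,B_m) = \mathrm{diag}\big(p(b_{1,1},\ldots,b_{m,1}),\ldots,p(b_{1,n},\ldots,b_{m,n})\big) = A$. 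The case of a nonconstant $p \in F[x]$ is handled identically, invoking the corresponding part of Theorem~\ref{p(F) neq 0}.

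There is no genuine obstacle in this argument; the single point demanding a little care is the coordinatewise evaluation on diagonal matrices, which is a routine consequence of the fact that polynomial evaluation respects the product decomposition of the diagonal subalgebra, once one records that the coefficients of $p$ lie in the center $F$ so that this subalgebra is indeed an $F$-subalgebra on which $p$ may be evaluated.
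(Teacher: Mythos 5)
Your argument is correct and is essentially identical to the paper's: both reduce to diagonal matrices via similarity-invariance of the image, then lift the identity $p(D)=D$ from Theorem~\ref{p(F) neq 0} coordinatewise along the diagonal. (Your indexing $B_k=\mathrm{diag}(b_{k,1},\ldots,b_{k,n})$ is in fact the cleaner, correct form of what the paper writes with a small index slip.)
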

	
	Now, we will show that, under certain restrictions, every matrix over an algebraically closed division ring can be expressed as a product of two images of polynomials evaluated on matrices over the base division ring. By Corollary~\ref{one variable}, it suffices to show that every matrix over a division ring can be written as a product of two diagonalizable matrices.
	
	Since this result is proven in \cite{Pa_DuSo_25}, but the work remains unpublished, we provide a brief proof here for completeness. First, for matrices over a field, this result holds by Botha's work \cite{Pa_Bo_98,Pa_Bo_99}. In the noncommutative case, we start by considering the infiniteness of the center of an algebraically closed division ring. 
	
	Let $D$ be an algebraically closed division ring with infinite center $F$. Since  $F$ is infinite, we can choose distinct elements $\lambda_1,\ldots,\lambda_n\in F$ and take $$B=\mathrm{diag}(\lambda_1,\ldots,\lambda_n)\in\mathrm{M}_n(F).$$ Now, let $A\in\mathrm{M}_n(D)$. The claim is trivial if $A$ has the form: $A=\lambda\mathrm{I}_n$ where $\lambda\in F$. Hence, we focus on the case where $A\notin\{\lambda\mathrm{I}_n\mid\lambda\in F\}$. First, we consider that $A\in\mathrm{GL}_n(D)$. By \cite[Theorem 2.1]{Pa_EgGo_19},  there exists $P\in \mathrm{GL}_n(D)$ such that $$P^{-1}AP=UHV$$ where $U\in \mathrm{LT}_n(D), V\in \mathrm{UT}_n(D)$ and $H=\mathrm{diag}(1,\dots,1,h)$ for some $h$ in $D\setminus\{0\}$. Combining the observation $$P^{-1}AP=(UB)(B^{-1}HV)$$ with \cite[Lemma 2.1]{Pa_DuSo_25_1}, we conclude that both $UB$ and $B^{-1}HV$ are diagonalizable matrices in $\mathrm{M}_n(D)$; hence  $A$ is a product of two diagonalizable matrices, as promised. Note that the global assumption in \cite[Lemma 2.1]{Pa_DuSo_25_1}, which requires all entries on the main diagonal to belong to \( F \), still holds in the case where there is exactly one element outside \( F \). The remaining case is when \( A \notin \mathrm{GL}_n(D) \). In this case, if \( A \) is nilpotent, we rely on the classical result that any nilpotent matrix over \( D \) is similar to its Jordan canonical form, which is indeed a matrix over \( F \) (see, for instance, \cite[Lemma 3.2]{Pa_AbLe_21}). Therefore, \( A \) is similar to a matrix over \( F \). This, combined with Botha’s results in \cite{Pa_Bo_98, Pa_Bo_99}, completes the proof. Our final focus is on the case where $A$ is not nilpotent. According to  \cite[Theorem 15, Page 28]{Bo_Ja_43}, there exists $P\in\mathrm{GL}_n(D)$ such that $$P^{-1}AP=\begin{pmatrix}
		A_1&0\\0&A_2
	\end{pmatrix}$$ where $A_1\in\mathrm{GL}_{n-t}(D)$ and $A_2\in\mathrm{M}_{t}(D)$ is nilpotent for some positive integer $t$. Following the arguments above, we conclude that  $A_1$ and $A_2$ can each be represented as products of two diagonalizable matrices in $\mathrm{M}_{n-t}(D)$ and $\mathrm{M}_t(D)$, respectively, implying $A\in\mathrm{M}_n(D)$ can also be represented similarly. These assertions complete the proof.	 
	
	With these arguments in place, we can derive the following theorem.
	
	\begin{theorem}\label{theorem1}
		Let $D$ be an algebraically closed division ring with infinite center $F$. If \( p \) is either an element in \( F\langle X_1, \ldots, X_m \rangle \) with zero constant term such that \( p(F) \neq \{0\} \), or a nonconstant polynomial in $F[x]$,  then every matrix over $D$ can be expressed as a product of two elements from $p(\mathrm{M}_n(D))$ 
	\end{theorem}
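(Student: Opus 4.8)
The plan is to separate the polynomial-evaluation content from the purely multiplicative content. By Corollary~\ref{one variable}, under either hypothesis on $p$ the set $p(\mathrm{M}_n(D))$ already contains every diagonalizable matrix over $D$; so it suffices to prove the following ring-theoretic statement, which I would isolate as a lemma: \emph{every $A \in \mathrm{M}_n(D)$ is a product of two diagonalizable matrices over $D$}. This is exactly the (unpublished) result of \cite{Pa_DuSo_25}, so I would reproduce its proof. Since both "being diagonalizable" and "admitting such a two-factor decomposition" are invariant under conjugation, throughout I may freely replace $A$ by $P^{-1}AP$.

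I would then argue case by case on $A$. If $A = \lambda \mathrm{I}_n$ with $\lambda \in F$ there is nothing to do, so assume $A$ is not a central scalar matrix. First suppose $A \in \mathrm{GL}_n(D)$: using that $F$ is infinite, fix pairwise distinct $\lambda_1, \dots, \lambda_n \in F$ and put $B = \mathrm{diag}(\lambda_1, \dots, \lambda_n)$. By \cite[Theorem 2.1]{Pa_EgGo_19} there is $P \in \mathrm{GL}_n(D)$ with $P^{-1}AP = UHV$, where $U \in \mathrm{LT}_n(D)$, $V \in \mathrm{UT}_n(D)$, and $H = \mathrm{diag}(1, \dots, 1, h)$ for some $h \in D \setminus \{0\}$. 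Writing $P^{-1}AP = (UB)(B^{-1}HV)$, the first factor is lower triangular with diagonal $(\lambda_1, \dots, \lambda_n) \in F^n$, and the second is upper triangular with diagonal $(\lambda_1^{-1}, \dots, \lambda_{n-1}^{-1}, \lambda_n^{-1}h)$, which lies in $F$ except possibly for the single entry $\lambda_n^{-1}h$. By \cite[Lemma 2.1]{Pa_DuSo_25_1} each such factor is diagonalizable — here I would note that the hypothesis of that lemma (all main-diagonal entries in $F$) still applies when at most one entry lies outside $F$, which is the situation at hand. Hence $A$ is a product of two diagonalizable matrices.

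For the singular cases: if $A$ is nilpotent, its Jordan canonical form is a matrix over $F$ by \cite[Lemma 3.2]{Pa_AbLe_21}, so $A$ is conjugate to a matrix over $F$, and Botha's field result \cite{Pa_Bo_98, Pa_Bo_99} produces a factorization into two diagonalizable matrices over $F$, a fortiori over $D$. If $A$ is singular but not nilpotent, the Fitting-type decomposition \cite[Theorem 15, Page 28]{Bo_Ja_43} yields $P \in \mathrm{GL}_n(D)$ with $P^{-1}AP = \mathrm{diag}(A_1, A_2)$, where $A_1 \in \mathrm{GL}_{n-t}(D)$ and $A_2 \in \mathrm{M}_t(D)$ is nilpotent; applying the invertible case to $A_1$ and the nilpotent case to $A_2$ and taking the block-diagonal product of the two factorizations finishes the argument, and combining with Corollary~\ref{one variable} gives the theorem.

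The step I expect to be the main obstacle is the invertible case: over a noncommutative division ring "triangular with pairwise distinct diagonal entries" does not by itself force diagonalizability, since conjugacy rather than equality governs eigenvalues, so one must genuinely appeal to the precise form of \cite[Lemma 2.1]{Pa_DuSo_25_1} and check that its centrality hypothesis survives the presence of the single stray entry $\lambda_n^{-1}h \notin F$. The remaining ingredients — the reduction through Corollary~\ref{one variable}, the handling of $\lambda \mathrm{I}_n$, and the gluing of the invertible and nilpotent blocks — are routine, resting only on conjugation-invariance and the cited classical decomposition theorems.
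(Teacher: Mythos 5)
Your proposal is correct and follows essentially the same route as the paper: the reduction to "every matrix is a product of two diagonalizable matrices" via Corollary~\ref{one variable}, the $UHV$ decomposition of Egorchenkova--Gordeev rewritten as $(UB)(B^{-1}HV)$ with $B$ a diagonal matrix of distinct central entries, the appeal to \cite[Lemma 2.1]{Pa_DuSo_25_1} (including the same remark about the single diagonal entry $\lambda_n^{-1}h$ possibly lying outside $F$), and the Jordan-form-plus-Botha and Fitting-decomposition treatment of the singular cases. No substantive difference from the paper's argument.
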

	
	\subsection{The center of an algebraically closed division ring}
	
	Theorem~\ref{theorem1} requires the infiniteness of the center of an algebraically closed division ring. In the case of finite dimensionality, we rely on Baer's theorem for algebraically closed finite-dimensional division rings (see, e.g., \cite[pp.~255--256]{Bo_Lam_01}). This theorem states that if \( D \) is an algebraically closed division ring that is finite-dimensional over its center, then one of the following assertions must hold:
	\begin{enumerate}[\rm (i)]
		\item \( D \) is an algebraically closed field.
		\item The center \( F \) of \( D \) is a real-closed field and \( D \) is the ordinary quaternion division ring.
	\end{enumerate}Recall that a field \( F \) is \textit{real-closed} if \( F \) is not algebraically closed but the field extension \( F(\sqrt{-1}) \) is algebraically closed. A division ring \( D \) with center \( F \) is called the \textit{ordinary quaternion division ring} if \( D \) has the form
	\[
	D = \{ a + bi + cj + dk \mid a, b, c, d \in F, \, i^2 = j^2 = k^2 = -1, \, ij = -ji = k \}.
	\]Based on our current understanding, it is not yet known whether the center of an algebraically closed division ring is infinite in the case of infinite dimensionality. On another note, Theorem~\ref{theorem1} holds for algebraic division rings. Recall that an \textit{algebraic} division ring is a division ring in which every element is a root of a nonzero polynomial over its center. 	
	
	Let $D$ be a division ring that is both algebraically closed and algebraic over its center $F$. It is well known that an algebraically closed field must be infinite, which completes the proof when $D$ is commutative. Now, suppose $D$ is noncommutative. By \cite[(13.11) Theorem (Jacobson)]{Bo_Lam_01}, if $F$ is finite, then $D$ must necessarily be an algebraically closed field, which leads to a contradiction. Therefore, $F$ must be infinite. 
	
	The above result leads to the following question: Does there exist an algebraically closed division ring \( D \) with finite center? At a first glance, this is likely a difficult question. However, Cohn’s construction provides a method for constructing such a division ring. We extend our gratitude to a user on Mathematics Stack Exchange (Question 5030291) for bringing this result to our attention. The construction proceeds as follows.  
	
	According to \cite{Pa_Co_73} and \cite[p.~308]{Bo_Cohn_95}, an \emph{existentially closed division ring} (or \emph{EC-division ring}) over a field \( k \) is a division ring \( D \) that is a \( k \)-algebra and satisfies the property that any existential sentence with constants from \( D \), which holds in some extension of \( D \), must already hold in \( D \) itself. For instance, if two matrices are similar over an extension of an EC-division ring \( D \), then they must also be similar over \( D \) itself.  
	
	A result \cite[Theorem 6.5.3]{Bo_Cohn_95} states that given a division ring \( K \) with a central subfield \( k \), there exists an EC-division ring \( D \) (over \( k \)) containing \( K \), in which every finite consistent system of equations over \( K \) has a solution. Here, a system is called \emph{consistent} if it has a solution in some extension division ring. Moreover, \cite[Corollary 6.5.6]{Bo_Cohn_95} establishes that the center of an EC-division ring \( D \) over \( k \) is precisely \( k \).  
	
	Furthermore, \cite[Theorem 8.5.1]{Bo_Cohn_95} asserts that any polynomial equation in one central variable with coefficients in a division ring has a solution in some extension of the division ring. To obtain an example where the center is finite, we take \( k \) to be a finite field, such as a prime subfield of \( K \) in positive characteristic. It is worth noting that this construction does not require the assumption that every finite consistent set of equations over \( K \) has a solution; rather, the essential ingredient is the existence of an EC-division ring over a finite field.  
	
	In summary, given a division ring \( K \) of positive characteristic, we can choose \( k \) as a finite field contained in its center. By Cohn’s book, there exists an EC-division ring \( D \) containing \( K \). Moreover, any polynomial equation of the form  
	\[
	a_mx^m+ a_{m-1}x^{m-1}+\dots+a_1x+ a_0 = 0,
	\]  
	where \( m \) is a positive integer and \( a_0, a_1, \dots, a_m \in D \) with \( a_m \neq 0 \), always has a solution in some extension of \( D \). Since \( D \) is an EC-division ring, it follows that such an equation has a solution in \( D \) itself. Finally, the center of \( D \) is precisely \( k \), providing the desired example.

	The case where \( D \) is a division ring that is not algebraic over its finite center is much more difficult than the algebraic case. To the best of our knowledge, no classification exists for division rings that are not algebraic over their finite centers. It is known, however, that such division rings must form infinite-dimensional vector spaces over their centers, and each element must commute with at least one element that is not algebraic over the center. Additionally, there exist noncommutative division rings with their finite centers (see \cite[$\S4$, Example 3]{Pa_ChuLee_79} and \cite[Proposition 2.3.5]{Bo_Cohn_95}).
	
	Returning to Theorem~\ref{theorem1}, we can also derive the following theorem:
	
	\begin{theorem}\label{theorem2}
		Let \( D \) be an algebraically closed division ring with center \( F \), except when \( D \) is a division ring that is not algebraic over its finite center $F$. If \( p \) is either an element in \( F\langle X_1, \ldots, X_m \rangle \) with zero constant term such that \( p(F) \neq \{0\} \), or a nonconstant polynomial in $F[x]$,  then every matrix over \( D \) can be written as a product of two elements from \( p(\mathrm{M}_n(D)) \).
	\end{theorem}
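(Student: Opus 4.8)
The plan is to deduce Theorem~\ref{theorem2} from Theorem~\ref{theorem1} by checking that, outside the single excluded case, the center $F$ is forced to be infinite; once this is known, Theorem~\ref{theorem1} applies verbatim and there is nothing more to prove. So the whole argument reduces to a dichotomy on the center, with no new ring-theoretic input.

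Concretely, let $D$ be an algebraically closed division ring with center $F$, and assume $D$ is \emph{not} a division ring that fails to be algebraic over a finite center. If $F$ is infinite, I would invoke Theorem~\ref{theorem1} directly, and then $p(\mathrm{M}_n(D))$ contains all diagonalizable matrices (Corollary~\ref{one variable}) and every matrix over $D$ splits as a product of two such, giving the claim. Suppose instead that $F$ is finite. Then one of two things happens. Either $D$ is algebraic over $F$: in that case Jacobson's theorem \cite[(13.11) Theorem (Jacobson)]{Bo_Lam_01} forces $D$ to be commutative, so $D$ is a field that is algebraic over the finite field $F$ and algebraically closed; but an algebraically closed field is infinite, whence $F = Z(D) = D$ is infinite, contradicting the assumption that $F$ is finite. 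Or $D$ is not algebraic over $F$: but then $D$ is exactly a division ring that is not algebraic over its finite center, the case we have excluded. Either way we reach a contradiction, so $F$ must be infinite, and Theorem~\ref{theorem1} yields the factorization of every matrix over $D$ as a product of two elements of $p(\mathrm{M}_n(D))$.

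The only point requiring care is the finite-center algebraic branch: one must use Jacobson's theorem to collapse ``algebraic over a finite field'' to ``commutative field'' and then the elementary fact that an algebraically closed field is infinite to close the contradiction. This is precisely why the excluded case in the statement is phrased as ``not algebraic over its finite center'': the Cohn-type examples discussed above show that algebraically closed division rings with finite center do exist, but all of them fall into that excluded case, so excising it is exactly what is needed to guarantee an infinite center. In short, Theorem~\ref{theorem2} is Theorem~\ref{theorem1} restated under the sharpest hypothesis on $D$ that still forces $F$ to be infinite, and I do not expect any genuine obstacle beyond bookkeeping the center dichotomy correctly.
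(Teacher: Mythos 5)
Your proposal is correct and takes essentially the same route as the paper: the authors likewise derive Theorem~\ref{theorem2} from Theorem~\ref{theorem1} by arguing that, once the case of a division ring not algebraic over a finite center is excluded, Jacobson's theorem \cite[(13.11)]{Bo_Lam_01} together with the infinitude of algebraically closed fields forces $F$ to be infinite. Nothing further is needed.
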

	
	\subsection{The finite-dimensional case}
	
	We conclude this section with the finite-dimensional case. A well-known theorem of Baer states that if \( D \) is a centrally-finite algebraically closed division ring, then one of the following assertions must hold:
	\begin{enumerate}[\rm (i)]
		\item \( D \) is an algebraically closed field.
		\item The center \( F \) of \( D \) is a real-closed field and \( D \) is the ordinary quaternion division ring.
	\end{enumerate}It is known that every matrix over an algebraically closed field $F$ or the ordinary quaternion division ring $D$ with center $F$ has a Jordan normal form over $F$ or $F(i)$, respectively (see \cite[Theorem B-3.66, Page 397]{Bo_Rot_15} and \cite[Theorem 5.5.3]{Bo_Ro_14}). Although \cite[Theorem 5.5.3]{Bo_Ro_14} is only stated for the field of real numbers, it also holds for real-closed fields. We may also follow another proof given in \cite[Proposition~3.7]{Pa_Fa_18}.
	
	Regarding Jordan normal forms, Panja and Prasad \cite{Pa_Pa_23} investigated the image of polynomials with zero constant term and Waring-type problems on upper triangular matrix algebras over an algebraically closed field. Their main result relies on several key concepts, which we briefly recall for clarity.
	
	Let $n\geq1$ be an integer and let $\mathrm{T}_n(F)$ denote the set of all $n \times n$ upper triangular matrices over $F$. The set of all strictly upper triangular matrices is denoted by $\mathrm{T}_n(F)^{(0)}$. More generally, for $t \geq 0$, the set of upper triangular matrices whose entries $(i, j)$ are zero whenever $j - i \leq t$ is denoted by $\mathrm{T}_n(F)^{(t)}$. Let $\mathcal{T}(\mathrm{T}_n(F))$ represent the set of all polynomial identities satisfied by $\mathrm{T}_n(F)$. It follows that:
	\[
	\mathcal{T}(F) \supset \mathcal{T}(\mathrm{T}_2(F)) \supset \mathcal{T}(\mathrm{T}_3(F)) \supset \cdots.
	\]
	Given a polynomial $p$ in noncommutative variables over $F$, its order is defined as the smallest integer $m$ such that $p \in \mathcal{T}(\mathrm{T}_m(F))$ but $p \notin \mathcal{T}(\mathrm{T}_{m+1}(F))$. Note that $\mathrm{T}_1(F) = F$. A polynomial $p$ has order $0$ if $p \in \mathcal{T}(F)$, and the order of $p$ is denoted by $\mathrm{ord}(p)$.

	\begin{theorem} {\rm \cite[Theorem 5.18]{Pa_Pa_23}} \label{lem:Pa}
		Let $n \geq 2$ be a positive integer and let $p \in F\langle X_1,\ldots,X_m \rangle$ be a polynomial with zero constant term in noncommutative variables over an algebraically closed field $F$.  Then, the following statements hold:
		\begin{enumerate}[\rm (i)]
			\item If $\mathrm{ord}(p) = 0$, then $p(\mathrm{T}_n(F))$ is a dense subset of $T_n(F)$ with respect to the Zariski topology.
			\item If $\mathrm{ord}(p) = 1$, then $p(\mathrm{T}_n(F)) = \mathrm{T}_n(F)^{(0)}$.
			\item If $1 < \mathrm{ord}(p) < n - 1$, then $p(\mathrm{T}_n(F)) \subseteq \mathrm{T}_n(F)^{(r-1)}$, and equality may not hold in general. 
			\item If $\mathrm{ord}(p) = n - 1$, then $p(\mathrm{T}_n(F)) = \mathrm{T}_n(F)^{(n-2)}$.
			\item If $\mathrm{ord}(p) \geq n$, then $p(\mathrm{T}_n(F)) = \{0\}$.
		\end{enumerate}
	\end{theorem}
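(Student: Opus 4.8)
The plan is to split the assertion into an ``upper bound'' part --- the inclusions in (ii)--(iv) and the vanishing (v) --- and a ``lower bound'' part --- the equalities in (ii), (iv) and the density in (i) --- and to reduce the latter to a layer-by-layer analysis along the filtration $\mathrm{T}_n(F) \supseteq \mathrm{T}_n(F)^{(0)} \supseteq \cdots \supseteq \mathrm{T}_n(F)^{(n-1)} = \{0\}$, using repeatedly that $F$ is algebraically closed. For the inclusions, the device I would use is that for each $1 \le t \le n$ there is a family of $F$-algebra homomorphisms $\phi_s \colon \mathrm{T}_n(F) \to \mathrm{T}_t(F)$, for $0 \le s \le n-t$, sending a matrix to its principal submatrix on rows and columns $s+1, \ldots, s+t$ (these are homomorphisms precisely because the matrices are upper triangular), and $\bigcap_s \ker \phi_s = \mathrm{T}_n(F)^{(t-1)}$. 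Each $\phi_s$ is surjective, and $\mathrm{T}_t(F)$ is also a quotient of $\mathrm{T}_n(F)/\mathrm{T}_n(F)^{(t-1)}$, so $\mathcal{T}\big(\mathrm{T}_n(F)/\mathrm{T}_n(F)^{(t-1)}\big) = \mathcal{T}(\mathrm{T}_t(F))$; hence $\mathrm{ord}(p) \ge t$ if and only if $p(\mathrm{T}_n(F)) \subseteq \mathrm{T}_n(F)^{(t-1)}$. Taking $t = \mathrm{ord}(p)$ yields the inclusions in (ii), (iii), (iv), and $t = n$ yields (v). The sharpness remark in (iii) I would settle by exhibiting, for some $n$ and $t$ with $1 < t < n-1$, an explicit polynomial of order $t$ whose image is a proper subset of $\mathrm{T}_n(F)^{(t-1)}$.

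For (i), where $\mathrm{ord}(p) = 0$ means $p$ is not a polynomial identity of $F$: first I would restrict $p$ to scalar matrices, getting a nonzero commutative polynomial with zero constant term, hence a nonconstant map $F^m \to F$, which over an algebraically closed field is surjective; evaluating coordinatewise on the diagonal subalgebra $F^n$ then shows every diagonal matrix lies in $p(\mathrm{T}_n(F))$. Since $p(g A_1 g^{-1}, \ldots, g A_m g^{-1}) = g\,p(A_1, \ldots, A_m)\,g^{-1}$, the image is stable under conjugation by $\mathrm{GL}_n(F) \cap \mathrm{T}_n(F)$; and any upper triangular matrix with pairwise distinct diagonal entries is conjugate, by an element of $\mathrm{UT}_n(F)$, to the diagonal matrix of its eigenvalues (solve the triangular eigenvector equations). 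Hence $p(\mathrm{T}_n(F))$ contains the Zariski-dense set of upper triangular matrices with distinct diagonal entries. Case (iv) then follows at once: $\mathrm{T}_n(F)^{(n-2)} = F e_{1n}$ is one-dimensional, the $(1,n)$-entry of $p$ is a polynomial map $\mathrm{T}_n(F)^m \to F$ that vanishes at the origin but, by the inclusion already proved, is not identically zero, hence is nonconstant, hence surjective.

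The substantive case is (ii), with $\mathrm{ord}(p) = 1$, and there I would argue by successive approximation along the filtration. The idea is that $p \notin \mathcal{T}(\mathrm{T}_2(F))$ forces a certain split derivative of $p$ --- obtained by singling out one occurrence of a variable $X_k$ in each monomial and specializing the letters to its left and to its right to two separate diagonal tuples, then summing over $k$ and over the occurrence --- to be a nonzero function; evaluating it at a generic diagonal tuple lets me realize any prescribed first superdiagonal modulo $\mathrm{T}_n(F)^{(1)}$. Since $\mathrm{T}_n(F)^{(1)}$ is a nilpotent ideal, perturbing a solution by an element of $\mathrm{T}_n(F)^{(r)}$ changes the value of $p$ only in superdiagonals $\ge r+1$ and, modulo $\mathrm{T}_n(F)^{(r+1)}$, changes the layer $r+1$ by that same nonvanishing split derivative at the still-generic diagonal; so I would correct the layers $2, 3, \ldots, n-1$ one at a time, keeping the already-matched layers fixed, and reach any element of $\mathrm{T}_n(F)^{(0)}$ exactly.

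I expect the main obstacle to be precisely this last step --- upgrading density to \emph{exact} surjectivity onto $\mathrm{T}_n(F)^{(0)}$ in (ii). One has to set up the successive-approximation scheme so that each correction is genuinely solvable (which is why the diagonal entries must stay generic and why one must verify the split derivatives do not vanish, everything ultimately resting on the single hypothesis $p \notin \mathcal{T}(\mathrm{T}_2(F))$), and so that the nilpotency of $\mathrm{T}_n(F)^{(1)}$ kills all the higher-order cross terms at each stage. The same scheme deliberately fails in the intermediate range of (iii), because there $p \in \mathcal{T}(\mathrm{T}_2(F))$ and the split derivative vanishes identically --- which is exactly why only the inclusion should be expected --- so the residual difficulty in (iii) is the concrete one of producing a polynomial that witnesses the failure of equality.
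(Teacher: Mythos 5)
This statement is quoted verbatim from \cite[Theorem 5.18]{Pa_Pa_23}; the paper offers no proof of its own, so there is nothing internal to compare against. Your outline is essentially the standard argument of that source and is sound: the window homomorphisms $\phi_s$ correctly yield the equivalence $\mathrm{ord}(p)\ge t \Leftrightarrow p(\mathrm{T}_n(F))\subseteq \mathrm{T}_n(F)^{(t-1)}$ (hence the inclusions in (ii)--(v)), the diagonal-plus-conjugation argument gives (i) and, via surjectivity of nonconstant maps over an algebraically closed field, (iv), and the split-derivative/layer-correction scheme is exactly how the equality in (ii) is obtained. The two points you flag as remaining --- exhibiting a witness for strictness in (iii) and fully executing the successive approximation in (ii), where the nonvanishing of some $c_k(x,y)=\sum_{\mathrm{occ}}\lambda^{(k)}_{\mathrm{left}}(x)\lambda^{(k)}_{\mathrm{right}}(y)$ is precisely the content of $p\notin\mathcal{T}(\mathrm{T}_2(F))$ --- are indeed where the work lies, but neither presents an obstruction.
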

	
	Consider a division ring $D$ with center $F$, and let $p \in F\langle X_1,\ldots,X_m \rangle$ be a polynomial with zero constant term in noncommutative variables over $F$. It follows that for any $A \in \mathrm{M}_n(D)$ and $P \in \mathrm{GL}_n(D)$, we have $p(P^{-1}AP) = P^{-1}p(A)P$. By combining this result with the well-know theorem of Baer and Theorem~\ref{lem:Pa}, we arrive at the following theorem.
	
	\begin{theorem}\label{des}
		Let $n \geq 2$ be an integer and let $D$ be a centrally-finite algebraically closed division ring with center $F$ and $$K=\begin{cases}
			F(i)\text{ if }D\text{ is noncommutative},\\
			F\text{ if }D\text{ is commutative}.
		\end{cases}$$ Consider $p \in F\langle x_1,\ldots,x_m \rangle$ as a polynomial in noncommutative variables with zero constant term over $K$. Then, the following statements holds:
		\begin{enumerate}[\rm (i)]
			\item If $0<\mathrm{ord}(p)<n$, then $$p(\mathrm{M}_n(D))\subseteq\{P^{-1}AP\mid P\in\mathrm{GL}_n(D), A\in\mathrm{T}_n(K)^{(r-1)}\}\subseteq\mathrm{M}_n(D).$$
			\item If $\mathrm{ord}(p)\geq n$, then $p(\mathrm{M}_n(D))=\{0\}$.
		\end{enumerate} 
	\end{theorem}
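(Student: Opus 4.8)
The plan is to reduce, via Baer's theorem, to the triangular-matrix description of Panja and Prasad (Theorem~\ref{lem:Pa}) over an algebraically closed field, and then to transport that description from $\mathrm{T}_n(K)$ to $\mathrm{M}_n(D)$ using a Jordan form over $K$ together with the equivariance $p(P^{-1}XP)=P^{-1}p(X)P$ recorded above; throughout, $r$ denotes $\mathrm{ord}(p)$, as in Theorem~\ref{lem:Pa}. First I would invoke Baer's theorem: either $D=F=K$ is an algebraically closed field, or $D$ is the ordinary quaternion division ring over a real-closed field $F$ and $K=F(i)$; in both cases $K$ is an algebraically closed subfield of $D$ containing $F$. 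Since $F$ is infinite, a Vandermonde/Zariski-density argument gives $p\in\mathcal{T}(\mathrm{T}_j(F))$ if and only if $p\in\mathcal{T}(\mathrm{T}_j(K))$ for every $j$, so $\mathrm{ord}(p)$ is the same whether $p$ is read over $F$ or over $K$; thus Theorem~\ref{lem:Pa} applies over $K$, and combining its cases (ii)--(v) we get $p(\mathrm{T}_n(K))\subseteq\mathrm{T}_n(K)^{(r-1)}$ when $0<r<n$ and $p(\mathrm{T}_n(K))=\{0\}$ when $r\ge n$.

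Next I would set up the transport. \emph{Jordan form over $K$}: every matrix over $D$ is $\mathrm{GL}_n(D)$-conjugate to a matrix in $\mathrm{T}_n(K)$ --- the classical Jordan form when $D=K$, and the quaternionic Jordan form of \cite[Theorem 5.5.3]{Bo_Ro_14} (equivalently \cite[Proposition~3.7]{Pa_Fa_18}, valid over any real-closed field) when $D$ is the quaternion ring. \emph{Equivariance}: since the coefficients of $p$ lie in $F$, which is central in $D$, the map $X\mapsto P^{-1}XP$ is an $F$-algebra automorphism of $\mathrm{M}_n(D)$, so $p(P^{-1}B_1P,\ldots,P^{-1}B_mP)=P^{-1}p(B_1,\ldots,B_m)P$ for all $B_i\in\mathrm{M}_n(D)$ and all $P\in\mathrm{GL}_n(D)$. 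Thus $p(\mathrm{M}_n(D))$ is a union of $\mathrm{GL}_n(D)$-conjugacy classes, and it suffices to show that each such class meets $\mathrm{T}_n(K)^{(r-1)}$ when $0<r<n$, and is $\{0\}$ when $r\ge n$; the reverse inclusion in (i) is immediate because $\mathrm{T}_n(K)^{(r-1)}\subseteq\mathrm{M}_n(K)\subseteq\mathrm{M}_n(D)$.

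Concretely I would fix $B_1,\ldots,B_m\in\mathrm{M}_n(D)$, put $A=p(B_1,\ldots,B_m)$, and choose $P\in\mathrm{GL}_n(D)$ with $T:=P^{-1}AP\in\mathrm{T}_n(K)$ a Jordan form of $A$ over $K$; by equivariance, $T=p(P^{-1}B_1P,\ldots,P^{-1}B_mP)$, so $T$ is a value of $p$ that happens to be upper triangular over $K$. If one can then show $T\in\mathrm{T}_n(K)^{(r-1)}$ (when $0<r<n$), respectively $T=0$ (when $r\ge n$), then $A=PTP^{-1}$ lies in the claimed set (resp. $A=0$), and both parts follow. The tool I would bring to bear here is the mechanism underlying Theorem~\ref{lem:Pa}: $\mathrm{ord}(p)\ge r$ places $p$ in the $T$-ideal generated by the product of $r$ commutators $[x_1,x_2]\cdots[x_{2r-1},x_{2r}]$, so every value of $p$ is a generalized-polynomial combination of products of at least $r$ commutators of its arguments --- and when those arguments are upper triangular over $K$ these commutators are strictly triangular, so their $r$-fold products land in $\mathrm{T}_n(K)^{(r-1)}$, which is precisely how Theorem~\ref{lem:Pa} is established.

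The step I expect to be the real obstacle is converting this last observation into a statement over $\mathrm{M}_n(D)$. The naive hope --- conjugate the whole input tuple $(B_1,\ldots,B_m)$ simultaneously into $\mathrm{T}_n(K)$ and then quote $p(\mathrm{T}_n(K))\subseteq\mathrm{T}_n(K)^{(r-1)}$ --- fails, since for $m,n\ge 2$ an $m$-tuple of matrices is in general not simultaneously triangularizable. One must instead work with a triangularization adapted to the single matrix $A=p(B_1,\ldots,B_m)$: choose $P$ putting $A$ into Jordan form over $K$, and argue that the commutator structure of $p$ evaluated at the conjugated arguments $P^{-1}B_iP$ remains compatible with the flag realizing that Jordan form, so that the first $r-1$ superdiagonals of $T$ (all of $T$ when $r\ge n$) are forced to vanish. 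Establishing this compatibility over the possibly noncommutative $D$, and checking it is consistent with the Jordan form living over $K$ rather than over $D$, is the delicate part of the argument; the rest is routine bookkeeping with Baer's theorem and the equivariance identity.
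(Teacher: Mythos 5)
Your outline is the same as the paper's (whose entire justification is the remark preceding the theorem: conjugation-equivariance of $p$, Baer's theorem, and Theorem~\ref{lem:Pa}), and you have put your finger on exactly the right obstacle --- but that obstacle is fatal, not merely ``delicate.'' Theorem~\ref{lem:Pa} describes $p(\mathrm{T}_n(K))$, i.e.\ the values of $p$ on tuples of \emph{upper triangular} inputs; to transfer it to $p(\mathrm{M}_n(D))$ one would need the input tuple $(B_1,\ldots,B_m)$ to be simultaneously triangularizable, which fails for $m,n\geq 2$, as you observe. Your proposed repair --- triangularize only the output $A=p(B_1,\ldots,B_m)$ over $K$ and argue that the ``commutator structure'' of $p$ forces the first $r-1$ superdiagonals of its Jordan form to vanish --- cannot be carried out, because the containment it would establish is false. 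Take $n=2$, $D=F=K$ an algebraically closed field, and $p=[x_1,x_2]=x_1x_2-x_2x_1$. Then $p$ is an identity of $\mathrm{T}_1(F)=F$ but not of $\mathrm{T}_2(F)$, so $\mathrm{ord}(p)=1$ (consistently with Theorem~\ref{lem:Pa}(ii)), and part~(i) of Theorem~\ref{des} would place every value of $p$ on $\mathrm{M}_2(F)$ in a conjugate of $\mathrm{T}_2(F)^{(0)}$, i.e.\ would force every such value to be nilpotent. But $[e_{12},e_{21}]=e_{11}-e_{22}$ is a value of $p$ on $\mathrm{M}_2(F)$ that is not nilpotent in any characteristic (in characteristic $2$ it equals $\mathrm{I}_2$). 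Part~(ii) fails similarly: $q=[[x_1,x_2],[x_3,x_4]]$ has $\mathrm{ord}(q)=2\geq n$, yet $q(e_{11},e_{12},e_{21},e_{11})=[e_{12},e_{21}]=e_{11}-e_{22}\neq 0$.

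So the step you flagged as ``the real obstacle'' is precisely where both your argument and the paper's break down, and no amount of care about flags over $K$ versus $D$ will bridge it, since the conclusion itself is wrong. The pieces of your write-up that do survive are the reduction via Baer's theorem, the equivariance identity $p(P^{-1}B_1P,\ldots,P^{-1}B_mP)=P^{-1}p(B_1,\ldots,B_m)P$ (so that $p(\mathrm{M}_n(D))$ is a union of $\mathrm{GL}_n(D)$-conjugacy classes), and the trivial second inclusion $\{P^{-1}AP\mid P\in\mathrm{GL}_n(D),\,A\in\mathrm{T}_n(K)^{(r-1)}\}\subseteq\mathrm{M}_n(D)$. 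A statement that the intended argument actually proves would restrict the inputs to a common conjugate of $\mathrm{T}_n(K)$ (equivalently, describe $p$ evaluated on simultaneously triangularizable tuples), or else replace $\mathrm{ord}(p)$ by an order computed relative to the full matrix algebra rather than the triangular one.
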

	
	Regarding the case where \( p \in F[x] \) is a nonconstant polynomial, obtaining concrete results can be challenging, especially for small values of \(n\) such as \(n=2\) or \(n=3\). For more details, see \cite{Pa_Lu_22, Pa_Me_23}.

	To conclude this section, we will prove that for any integer \(n > 1\), every matrix in \(\mathrm{SL}_n(D)\) can be expressed as a product of two multiplicative commutators from \(p(\mathrm{M}_n(D))\), based on the situation in Theorem~\ref{theorem2} and assuming further that \(D\) is a centrally-finite division ring. As mentioned earlier, one of the following statements holds:
	\begin{enumerate}[\rm (i)]
		\item \(D\) is an algebraically closed field.
		\item The center \(F\) of \(D\) is a real-closed field, and \(D\) is the ordinary quaternion division ring.
	\end{enumerate}
	
	\textbf{Case 1:} \(D\) is an algebraically closed field. According to \cite[Theorem 1]{Pa_SoDuHaBi_22}, every matrix in \(\mathrm{SL}_n(D)\) can be written as a product of two multiplicative commutators of involutions in \(\mathrm{GL}_n(D)\). When the characteristic of \(D\) is not 2, \cite[Proposition 2.3]{Pa_BiDuHaSo_23} shows that every involution in \(\mathrm{GL}_n(D)\) is diagonalizable and therefore lies in \(p(\mathrm{M}_n(D))\) by Corollary~\ref{one variable}. Hence, in this case, every matrix in \(\mathrm{SL}_n(D)\) can be expressed as a product of two multiplicative commutators of elements in \(p(\mathrm{M}_n(D))\).  
	
	\textbf{Case 2:} \(F\) is a real-closed field, and \(D\) is the ordinary quaternion division ring. By following Case 1 in the proof of \cite[Theorem 11]{Pa_HaNaSo_24} and \cite[Corollary 2.1]{Pa_BiLaMa_23} (or see \cite[Theorem 6]{Pa_BiNaTr_24}), every matrix in \(\mathrm{SL}_n(D) \setminus \{\lambda \mathrm{I}_n \mid \lambda \in F\}\) can be expressed as a product of two multiplicative commutators of involutions in \(\mathrm{GL}_n(D)\). Therefore, it suffices to consider matrices of the form \(\lambda \mathrm{I}_n\) with \(\lambda \in F\).  
	
	Let \(A = \lambda \mathrm{I}_n \in \mathrm{SL}_n(D)\) where \(\lambda \in F\). By \cite[Lemma 2.2]{Pa_BiLaMa_23}, it follows that \(\lambda^n = \pm 1\). If \(\lambda^n = 1\), then \(A \in \mathrm{SL}_n(F)\), and the proof follows from \cite[Theorem 1]{Pa_SoDuHaBi_22}. In the case where \(\lambda^n = -1\), it follows that \(n\) must be odd. If \(n = 2k\) is even, we get that \((\lambda^k)^2 = \lambda^n =  -1\), which contradicts the fact that \(F\) is real-closed. Therefore, when \(\lambda^n = -1\) and \(n\) is odd, we have \(A = \lambda \mathrm{I}_n = (-\mathrm{I}_n)(-\lambda \mathrm{I}_n)\). Since \(-\lambda \mathrm{I}_n \in \mathrm{SL}_n(F)\), by \cite[Theorem 1]{Pa_SoDuHaBi_22}, the matrix \(-\lambda \mathrm{I}_n\) can be expressed as a product of two multiplicative commutators:  
	\[
	-\lambda \mathrm{I}_n = A_1 A_2 A_1^{-1} A_2^{-1} A_3 A_4 A_3^{-1} A_4^{-1},
	\]  
	where \(A_1, A_2, A_3, A_4 \in \mathrm{GL}_n(F)\) and \(A_1^2 = A_2^2 = A_3^2 = A_4^2 = \mathrm{I}_n\). Furthermore,  
	\[
	-\mathrm{I}_n = (i \mathrm{I}_n)(j \mathrm{I}_n)(i \mathrm{I}_n)^{-1}(j \mathrm{I}_n)^{-1}.
	\]  
	Since \(i \mathrm{I}_n\) and \(j \mathrm{I}_n\) commute with all matrices in \(\mathrm{GL}_n(F)\),  
	\[
	(iA_1)(jA_2)(iA_1)^{-1}(jA_2)^{-1} = (i \mathrm{I}_n)(j \mathrm{I}_n)(i \mathrm{I}_n)^{-1}(j \mathrm{I}_n)^{-1} A_1 A_2 A_1^{-1} A_2^{-1}.
	\]  
	Note that \(iA_1, jA_2, A_3, A_4\) are diagonalizable and therefore belong to \(p(\mathrm{M}_n(D))\) by Corollary~\ref{one variable}. Thus, we conclude with the following theorem:
	
	\begin{theorem}\label{SL}
		\sloppy	Let \( D \) be a centrally finite algebraically closed division ring with center $F$ of characteristic different from $2$. If \( p \) is either an element in \( F\langle X_1, \ldots, X_m \rangle \) with zero constant term such that \( p(F) \neq \{0\} \), or a nonconstant polynomial in $F[x]$,  then every matrix in \(\mathrm{SL}_n(D)\) can be expressed as a product of two multiplicative commutators of  elements in \(p(\mathrm{M}_n(D))\).
	\end{theorem}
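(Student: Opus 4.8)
The plan is to invoke Baer's structure theorem for centrally-finite algebraically closed division rings (see \cite[(16.15) Theorem (Baer)]{Bo_Lam_01}): either $D$ is an algebraically closed field, or $F$ is real-closed and $D$ is the ordinary quaternion division ring. In both cases the guiding principle is the same: to write each element of $\mathrm{SL}_n(D)$ as a product of two multiplicative commutators whose individual factors are \emph{diagonalizable} matrices, which then automatically lie in $p(\mathrm{M}_n(D))$ by Corollary~\ref{one variable}.

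First I would dispose of the field case. Here \cite[Theorem 1]{Pa_SoDuHaBi_22} already expresses any matrix of $\mathrm{SL}_n(D)$ as a product of two multiplicative commutators of involutions in $\mathrm{GL}_n(D)$; since the characteristic of $F$ is not $2$, every such involution is diagonalizable by \cite[Proposition 2.3]{Pa_BiDuHaSo_23}, so Corollary~\ref{one variable} finishes this case.

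Next I would treat the quaternion case, splitting it according to whether the target matrix is scalar. For $A \in \mathrm{SL}_n(D) \setminus \{\lambda \mathrm{I}_n \mid \lambda \in F\}$, the quaternionic analogues of the field result in \cite{Pa_HaNaSo_24, Pa_BiLaMa_23, Pa_BiNaTr_24} express $A$ as a product of two multiplicative commutators of involutions, again diagonalizable. It remains to handle $A = \lambda \mathrm{I}_n$ with $\lambda \in F$: the Dieudonné-determinant condition forces $\lambda^n = \pm 1$ by \cite[Lemma 2.2]{Pa_BiLaMa_23}. If $\lambda^n = 1$ then $A \in \mathrm{SL}_n(F)$ and we are back in the field case; if $\lambda^n = -1$, a parity argument (a square root of $-1$ cannot exist in the real-closed field $F$) shows $n$ is odd and permits the factorization $\lambda \mathrm{I}_n = (-\mathrm{I}_n)(-\lambda \mathrm{I}_n)$ with $-\lambda \mathrm{I}_n \in \mathrm{SL}_n(F)$.

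The main obstacle is merging this last factorization into a \emph{single} product of two commutators of diagonalizable matrices. The trick I would use is to take $-\lambda \mathrm{I}_n = A_1 A_2 A_1^{-1} A_2^{-1} A_3 A_4 A_3^{-1} A_4^{-1}$ with each $A_k$ an involution over $F$ (from the field case), to observe that $-\mathrm{I}_n = (i\mathrm{I}_n)(j\mathrm{I}_n)(i\mathrm{I}_n)^{-1}(j\mathrm{I}_n)^{-1}$, and to use that $i\mathrm{I}_n$ and $j\mathrm{I}_n$ commute with every matrix in $\mathrm{GL}_n(F)$ in order to absorb the central units:
\[
(iA_1)(jA_2)(iA_1)^{-1}(jA_2)^{-1} = (i\mathrm{I}_n)(j\mathrm{I}_n)(i\mathrm{I}_n)^{-1}(j\mathrm{I}_n)^{-1}\, A_1 A_2 A_1^{-1} A_2^{-1},
\]
which yields $\lambda \mathrm{I}_n = (iA_1)(jA_2)(iA_1)^{-1}(jA_2)^{-1}\, A_3 A_4 A_3^{-1} A_4^{-1}$. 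One then verifies diagonalizability of the four factors: $A_3$ and $A_4$ by the characteristic $\neq 2$ fact above, and $iA_1$, $jA_2$ because their squares equal $-\mathrm{I}_n$, so their minimal polynomials divide $x^2+1$, which has distinct roots in $F(i)$. Corollary~\ref{one variable} then places all four factors in $p(\mathrm{M}_n(D))$, and assembling the sub-cases of the quaternion case together with the field case completes the proof.
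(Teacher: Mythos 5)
Your proposal is correct and follows essentially the same route as the paper: Baer's dichotomy, the involution decompositions from \cite{Pa_SoDuHaBi_22, Pa_HaNaSo_24, Pa_BiLaMa_23, Pa_BiNaTr_24}, and the identical trick of absorbing $-\mathrm{I}_n = (i\mathrm{I}_n)(j\mathrm{I}_n)(i\mathrm{I}_n)^{-1}(j\mathrm{I}_n)^{-1}$ into the first commutator for the scalar case $\lambda^n = -1$. Your explicit verification that $iA_1$ and $jA_2$ are diagonalizable (minimal polynomial dividing $x^2+1$) is a small but welcome addition the paper leaves implicit.
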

	
	We now turn our attention to arbitrary matrices, not necessarily in  \( \mathrm{SL}_n(D) \). With Theorem~\ref{SL} at our disposal, along with  some additional tools that will be introduced shortly, we will demonstrate  that under the conditions of Theorem~\ref{SL}, any matrix can be expressed either as a single additive commutator or as a difference of products of pairs of multiplicative commutators arising from the image of a noncommutative polynomial evaluated on $\mathrm{M}_n(D)$. To lay the groundwork for this result, we first establish the following two lemmas concerning idempotents. This approach is motivated by a similar observation for involutions, as every idempotent is similar to the matrix \(\mathrm{diag}(1, \ldots, 1, 0, \ldots, 0)\).

	\begin{lemma}\label{B C}
		Let \( D \) be a division ring, and let \( n > 1 \) be an integer.  
		For any matrix \( A \in \mathrm{M}_n(D) \), there exist matrices  
		\( B, C \in \mathrm{SL}_n(D) \) such that \( A \) can be written  
		as their difference, i.e., \( A = B - C \).
	\end{lemma}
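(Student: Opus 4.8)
The plan is to reduce the problem to writing $A$ as a difference $A = B - C$ where $B$ and $C$ are two matrices that are easy to place in $\mathrm{SL}_n(D)$. The natural candidates come from a shear-type construction: pick a single invertible "correction" matrix and adjust it by $A$. Concretely, I would look for $B$ of the form $B = A + C$ where $C \in \mathrm{SL}_n(D)$ is chosen so that $B$ is also in $\mathrm{SL}_n(D)$. The Dieudonné determinant takes values in the abelianization $D^\times/[D^\times,D^\times]$, so "being in $\mathrm{SL}_n(D)$" is a single (abelian) condition, and the hope is that we have enough freedom in choosing $C$ to satisfy it simultaneously for $C$ and for $A+C$.

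\textbf{Key steps.} First I would recall that, since $n > 1$, the group $\mathrm{SL}_n(D)$ is generated by elementary transvections $\mathrm{I}_n + a e_{ij}$ ($i \neq j$, $a \in D$), and more relevantly that $\mathrm{SL}_n(D)$ contains all matrices obtained from a given matrix by left/right multiplication by such transvections; in particular it contains a plentiful supply of matrices with prescribed entries in most positions. The cleanest approach: try $C = -P$ where $P$ is chosen so that both $P$ and $A - P$ lie in $\mathrm{SL}_n(D)$, i.e. we want $A = (A-P) - (-P)$ with $A - P, -P \in \mathrm{SL}_n(D)$ — but $-P$ being invertible forces $P$ invertible, and this symmetric phrasing may be awkward, so instead I would set $A = B - C$ and write $B = C + A$, seeking $C \in \mathrm{SL}_n(D)$ with $C + A \in \mathrm{SL}_n(D)$. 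Here is the concrete construction I expect to work: since $n \geq 2$, consider the "companion-like" or "cyclic shift plus correction" matrices. Take $C$ to be a lower-triangular-type matrix in $\mathrm{SL}_n(D)$ with enough free off-diagonal parameters $c_{ij}$ ($i>j$) that $C + A$ can be forced, by an $\mathrm{SL}_n$-preserving row/column reduction, into a form whose Dieudonné determinant is visibly $\overline 1$. The standard trick (used for "every matrix is a sum/difference of two matrices of determinant one" over fields, Botha-style) is: reduce $A$ by simultaneous operations from $\mathrm{SL}_n$ on both $B$ and $C$ until $A$ has at most one nonzero entry controlling the determinant, then absorb that entry. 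Since the statement allows $B,C$ arbitrary in $\mathrm{SL}_n(D)$ (no constraint relating them beyond $B - C = A$), this flexibility should suffice.

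\textbf{Alternative, cleaner route.} Because only the difference is constrained, I would actually first handle it by induction/block reduction: by \cite[Theorem 15, Page 28]{Bo_Ja_43} or the rational canonical form over $D$, it suffices to prove the claim for $A$ a single companion block (or even for $A$ arbitrary after conjugation, since $B - C = A \iff P^{-1}BP - P^{-1}CP = P^{-1}AP$). For a companion block one can write down $B$ and $C$ explicitly as companion-type matrices with Dieudonné determinant $\overline 1$ whose difference is $A$; the $2\times 2$ case is a direct computation and the general $n \times n$ case follows the same pattern, using that one can split $A = A - tE - (-tE + \ldots)$ freely. Simplest of all: write $A = (A + E) - E$ where $E$ is an elementary-type matrix; $E$ with Dieudonné determinant $\overline 1$ is easy, and $A + E$ lies in $\mathrm{GL}_n(D)$ for a suitable choice (this is an open condition-type argument over infinite $D$, and a direct construction when $D$ is finite hence a field), after which one further right-multiplies $A+E$ by a transvection to land in $\mathrm{SL}_n(D)$ — but that alters $E$ correspondingly; so one must be careful and instead right-multiply both: $A = (A+E)T \cdot T^{-1} - E T \cdot T^{-1}$ does not preserve the difference, so this needs the reduction to be done on $A$ directly.

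\textbf{Main obstacle.} The real difficulty is the noncommutativity of the Dieudonné determinant: over a field one freely rescales one row to fix the determinant, but over $D$ the determinant lives in $D^\times/[D^\times,D^\times]$ and rescaling a row of $C$ by $u$ also forces a matching change, and one cannot independently rescale rows of $B = A + C$. So the crux is to show that the two conditions "$\det C = \overline 1$" and "$\det(A+C) = \overline 1$" can be met simultaneously. I expect this to come down to: (a) choosing $C$ so that $A + C$ is invertible (automatic for generic $C$ when $D$ is infinite, and a short separate argument when $D$ is finite, i.e. a finite field, where one can just exhibit explicit matrices), and then (b) using a transvection $C \mapsto C + ae_{ij}$ with $i \neq j$ chosen outside the "pivot positions" so that it changes neither $\det C$ nor $\det(A+C)$, while a carefully chosen \emph{different} elementary modification lets us correct $\det C$ without disturbing $\det(A+C) = \overline{1}$ — or, more robustly, absorbing any discrepancy $\overline{d} = \det C \cdot \det(A+C)^{-1}$ into a single scaling of a row that passes through a zero row of $A$, which exists after the block reduction. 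I would present this last idea — reduce so that $A$ has a zero row, scale the corresponding row of $C$ (equivalently of $B$, since they agree there up to the zero of $A$) to kill the determinant discrepancy — as the heart of the proof.
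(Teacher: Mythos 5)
You correctly identify the crux — making $\det C$ and $\det(A+C)$ simultaneously equal to $\overline 1$ in $D^\times/[D^\times,D^\times]$ — and you correctly observe that two-sided multiplication of $A$ by matrices from $\mathrm{SL}_n(D)$ is a legitimate reduction. But the mechanism you finally propose for killing the determinant discrepancy does not work. If row $i$ of $A$ is zero, then row $i$ of $B=A+C$ equals row $i$ of $C$, and to preserve $B-C=A$ you must scale that row in \emph{both} $B$ and $C$ by the same unit $u$; this multiplies $\det B$ and $\det C$ by the same class $\overline u$, so the discrepancy $\overline d=\det C\cdot\det(A+C)^{-1}$ is invariant under exactly the operation you want to absorb it with. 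Moreover, a reduction of $A$ to a matrix with a zero row is only possible when $A$ is singular, so the invertible case is not covered by that step at all. The earlier alternatives in your write-up (companion blocks, generic choice of $C$, correcting by transvections) are not carried to completion either, so as it stands the argument has a genuine gap precisely at the point you flag as the heart of the proof.

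The missing idea is to avoid the determinant bookkeeping entirely by producing $B$ and $C$ that lie in $\mathrm{SL}_n(D)$ \emph{automatically}. The paper's route: by (the proof of) Vaserstein's lemma there are $P,Q\in\mathrm{GL}_n(D)$ with $PAQ$ of zero diagonal; factoring $P=P_2P_1$, $Q=Q_1Q_2$ with $P_1,Q_1\in\mathrm{SL}_n(D)$ and $P_2,Q_2$ diagonal of the form $\mathrm{diag}(1,\dots,1,\alpha)$, one gets that $P_1AQ_1=P_2^{-1}(PAQ)Q_2^{-1}$ still has zero diagonal. Any such matrix splits as $(\mathrm{I}_n+L)-(\mathrm{I}_n-U)$ with $L$ strictly lower and $U$ strictly upper triangular; both factors are unit triangular, hence products of transvections, hence in $\mathrm{SL}_n(D)$ with no Dieudonné computation needed, and conjugating back by $P_1^{-1},Q_1^{-1}\in\mathrm{SL}_n(D)$ finishes. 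Your proposal never reaches for a class of matrices whose membership in $\mathrm{SL}_n(D)$ is free, which is what makes the noncommutative determinant obstacle disappear.
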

	
	\begin{proof}
		Referring to the proof of \cite[Lemma 1]{Pa_Va_05}, we see that for any matrix \( A \in \mathrm{M}_n(D) \), there exist invertible matrices \( P, Q \in \mathrm{GL}_n(D) \) such that \( PAQ \) has all diagonal entries equal to zero. Additionally, by \cite[Lemma 2.3]{Pa_Li_21}, any invertible matrix \( M \in \mathrm{GL}_n(D) \) can be factored as either \( M_1 M_2 \) or \( M_2 M_1 \), where \( M_1 \in \mathrm{SL}_n(D) \) and \( M_2 \) is a diagonal matrix of the form \( \mathrm{diag}(1,1,\dots,1,\alpha) \) with \( \alpha \in D \setminus \{0\} \). Applying this result to \( P \) and \( Q \), we express them as \( P = P_2 P_1 \) and \( Q = Q_1 Q_2 \), where \( P_1, Q_1 \in \mathrm{SL}_n(D) \) and \( P_2, Q_2 \) are diagonal matrices of the form \( \mathrm{diag}(1,1,\dots,1,p_2) \) and \( \mathrm{diag}(1,1,\dots,1,q_2) \), respectively, with \( p_2, q_2 \in D \setminus \{0\} \).
		
		Now, setting \( N = PAQ \), we obtain 
		\[
		P_1 A Q_1 = P_2^{-1} N Q_2^{-1},
		\]
		which also has all diagonal entries equal to zero. Since matrices of this form can always be decomposed as the difference of a unit lower triangular matrix and a unit upper triangular matrix, we write 
		\[
		P_1 A Q_1 = B_1 - C_1,
		\]
		where \( B_1 \in \mathrm{LT}_n(D) \) and \( C_1 \in \mathrm{UT}_n(D) \). Finally, letting 
		\[
		B = P_1^{-1} B_1 Q_1^{-1}, \quad C = P_1^{-1} C_1 Q_1^{-1},
		\]
		we conclude that $A = B - C,$ completing the proof.
	\end{proof}
	
	\begin{lemma}\label{C}
		Let $D$ be a centrally finite algebraically closed division ring of characteristic $0$, and let \( n > 1 \) be an integer.  The following statements hold:  
		\begin{enumerate}[\rm (i)]  
			\item Any nilpotent matrix \( A \in \mathrm{M}_n(D) \) can be expressed as an additive commutator of idempotents in \( \mathrm{M}_n(D) \).  
			\item If \( A \in \mathrm{M}_n(D) \) has trace zero, then it can be written as the sum or the difference of two additive commutators of idempotents in \( \mathrm{M}_n(D) \).  
			\item Any matrix \( A \in \mathrm{M}_n(D) \) can be represented as a product of two additive commutators of idempotents in \( \mathrm{M}_n(D) \), except when \( A \) is invertible and \( n \) is odd.  
		\end{enumerate}  
	\end{lemma}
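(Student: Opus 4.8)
The plan is to combine two reductions with one explicit construction. First, for any $P\in\mathrm{GL}_n(D)$ we have $P^{-1}(EF-FE)P=(P^{-1}EP)(P^{-1}FP)-(P^{-1}FP)(P^{-1}EP)$, and conjugates of idempotents are idempotents, so each of (i)--(iii) is invariant under replacing $A$ by a similar matrix; moreover, if $A=A_1\oplus\cdots\oplus A_s$ and each $A_t$ is an additive commutator of idempotents, then so is $A$ (via block-diagonal idempotents). Hence we may pass to canonical forms and treat indecomposable blocks. Second, since $\mathrm{char}\,D=0$, the map $E\mapsto u=2E-\mathrm{I}_n$ is a bijection from idempotents onto involutions with $EF-FE=\tfrac14(uv-vu)$ for $v=2F-\mathrm{I}_n$; in particular $C=EF-FE$ anticommutes with the involution $u=2E-\mathrm{I}_n$, as a one-line computation gives $uC+Cu=0$. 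The construction I would use is: if $u=\mathrm{diag}(\mathrm{I}_r,-\mathrm{I}_{n-r})$, then a matrix of the corresponding block-anti-diagonal shape $\left(\begin{smallmatrix}0&X\\Y&0\end{smallmatrix}\right)$ is an additive commutator of idempotents whenever $XY+\tfrac14\mathrm{I}$ is invertible --- one solves the idempotent equations for $F=\left(\begin{smallmatrix}\tfrac12\mathrm{I}+G&X\\-Y&\tfrac12\mathrm{I}-H\end{smallmatrix}\right)$ by taking $G$ a polynomial square root of $XY+\tfrac14\mathrm{I}$ and $H$ the matching polynomial in $YX$. By Baer's theorem all the matrices in play can be taken over $K\subseteq D$, so these constructions, carried out over $K$, transfer to $D$.

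To handle (i), I would show that every nilpotent matrix anticommutes with an involution: it suffices to check this for a single nilpotent Jordan block $J_\ell$, where $u=\mathrm{diag}(1,-1,1,-1,\dots)$ satisfies $uJ_\ell u^{-1}=-J_\ell$, and block sums take care of a general nilpotent $N$. Writing $N$ in the eigenspace decomposition of such a $u$ gives $N=\left(\begin{smallmatrix}0&X\\Y&0\end{smallmatrix}\right)$ with $XY$ nilpotent, so $XY+\tfrac14\mathrm{I}$ is invertible and the construction above presents $N$ as an additive commutator of idempotents. For (ii), a trace-zero $A$ has vanishing reduced trace, hence is similar to a matrix $A'$ with all diagonal entries zero; writing $A'=L+U$ with $L$ strictly lower and $U$ strictly upper triangular, both $L$ and $U$ are nilpotent, so by (i) each is an additive commutator of idempotents and $A$ is a sum of two of them, while recording the decomposition as $A'=L-(-U)$ gives a difference instead --- which is why the statement reads ``sum or difference''.

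For (iii), I would first record that the exception is genuine: if $C=EF-FE$ is invertible, then $uC+Cu=0$ forces $u\sim-u$, and since $u\sim\mathrm{diag}(\mathrm{I}_r,-\mathrm{I}_{n-r})$ (with $r$ the rank of $(u+\mathrm{I}_n)/2$) while $-u\sim\mathrm{diag}(\mathrm{I}_{n-r},-\mathrm{I}_r)$, one gets $r=n-r$, so $n$ is even; thus an invertible product of two additive commutators of idempotents can occur only for $n$ even. For the positive direction, since by the construction above every block-anti-diagonal matrix with $XY+\tfrac14\mathrm{I}$ invertible is an additive commutator of idempotents, it suffices to factor each admissible $A$ (after conjugation) as a product of two matrices of that type: when $n$ is even and $A$ is invertible I would use that every invertible matrix over $D$ is a product of two matrices whose spectrum is invariant under $\lambda\mapsto-\lambda$, each of which is conjugate to a block-anti-diagonal matrix with $X,Y$ invertible, and I would arrange the side condition by choosing the factorization suitably; when $A$ is singular I would reduce, after conjugation, to a block-triangular form $\left(\begin{smallmatrix}0&\ast\\0&B\end{smallmatrix}\right)$ and build the two (now possibly singular) factors directly.

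The step I expect to be the main obstacle is this positive direction of (iii): realizing an arbitrary admissible $A$ as a product of exactly two commutators of idempotents, in particular producing the two block-anti-diagonal factors so that the anticommuting-involution structure and the invertibility of $XY+\tfrac14\mathrm{I}$ hold simultaneously and uniformly in $A$; the singular case for odd $n$ additionally calls for a careful rank count, since there a commutator of idempotents has rank at most $n-1$.
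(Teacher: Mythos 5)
Your reductions (invariance under similarity and under block sums, the passage from idempotents to involutions in characteristic $0$, and the transfer to $K\subseteq D$ via Baer's theorem and the Jordan form) are sound, and your treatment of (i) and (ii) is essentially a from-scratch reconstruction of what the paper obtains by citation: the paper handles the field case of (i) by quoting Drnov\v{s}ek--Radjavi--Rosenthal and then proves (ii) exactly as you do, conjugating a trace-zero $A$ to a matrix with zero diagonal (Amitsur--Rowen) and splitting it into strictly upper and strictly lower triangular nilpotent pieces. Your explicit criterion that a block-anti-diagonal matrix $\left(\begin{smallmatrix}0&X\\ Y&0\end{smallmatrix}\right)$ with $XY+\tfrac14\mathrm{I}$ invertible is a commutator of idempotents is the right mechanism for (i), and your observation that an \emph{invertible} commutator of idempotents forces $n$ even correctly explains why the exception in (iii) must be there.

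However, the positive direction of (iii) is a genuine gap, and you flag it yourself. You never produce, for a given admissible $A$, two matrices $C_1,C_2$ with $A=C_1C_2$ and each $C_i$ a commutator of idempotents: the proposed factorization of an invertible $A$ (for $n$ even) into two factors ``whose spectrum is invariant under $\lambda\mapsto-\lambda$'' is neither stated precisely nor proved; the compatibility of that factorization with the side condition on $XY+\tfrac14\mathrm{I}$ is only ``arranged suitably''; and the singular case --- which is the only case available when $n$ is odd, and in which the factors themselves must be singular and hence fall outside your invertible-$X$, $Y$ construction --- is not addressed at all. This is precisely the part the paper does \emph{not} prove by hand: it invokes Propositions 7.3 and 7.5 of Marcoux--Radjavi--Zhang for the algebraically closed field case and then transfers the statement to the quaternionic case through the Jordan normal form over $K=F(i)$. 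Without either importing that result or actually carrying out the factorization, your proposal establishes (i) and (ii) but not (iii).
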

	
	\begin{proof}
		According to Baer’s theorem, if $D$ is an algebraically closed division ring that is finite-dimensional over its center, then it must fall into one of the following two categories:
		\begin{enumerate}[\rm (a)]
			\item \( D \) is an algebraically closed field.
			\item The center \( F \) of \( D \) is a real-closed field and \( D \) is the ordinary quaternion division ring.
		\end{enumerate}	For the first case (a), item (i) is justified by \cite[Proposition 6]{Pa_Dr_02}, whereas items (ii) and (iii) follow directly from \cite[Propositions 7.3 and 7.5]{Pa_Ma_23}.
		
		Moving on to the second case (b), we observe that \( D \) contains a subfield \( K \), where \( K \) is an extension of \( F \) generated by \( i \). Following the proof of \cite[Proposition~3.7]{Pa_Fa_18}, we know that any \( A\in\mathrm{M}_n(D) \) has a Jordan normal form, meaning that there exists \( P \in \mathrm{GL}_n(D) \) such that  
		\[
		P^{-1} A P = \bigoplus_{i=1}^s J_{m_i}(\alpha_i),  
		\]  
		for some positive integers \( s, m_1, m_2, \ldots, m_s \), and scalars \( \alpha_1, \alpha_2, \ldots, \alpha_s \in K \), where \( m_1 + m_2 + \cdots + m_s = n \). Here,  
		\[
		J_{m_i}(\alpha_i) = \begin{pmatrix} 
			\alpha_i & 1 & 0 & \cdots & 0 & 0 & 0 \\ 
			0 & \alpha_i & 1 & \cdots & 0 & 0 & 0 \\ 
			0 & 0 & \alpha_i & \cdots & 0 & 0 & 0 \\ 
			\vdots & \vdots & \vdots & \ddots & \vdots & \vdots & \vdots \\ 
			0 & 0 & 0 & \cdots & \alpha_i & 1 & 0 \\ 
			0 & 0 & 0 & \cdots & 0 & \alpha_i & 1 \\ 
			0 & 0 & 0 & \cdots & 0 & 0 & \alpha_i
		\end{pmatrix} \in \mathrm{M}_{m_i}(D).
		\]  As \( \bigoplus_{i=1}^s J_{m_i}(\alpha_i) \) belongs to \( \mathrm{M}_n(K) \) and the expression of a matrix as a product of additive commutators of idempotents is preserved under similarity, assertion (iii) follows seamlessly from the first case (a). If $A$ is nilpotent, then all scalars $\alpha_1, \alpha_2, \ldots, \alpha_s$ must be zero, thereby establishing assertion (i). 
		
		We now turn our attention to the case of \( A \in \mathrm{M}_n(D) \) with trace zero. As established in \cite[Proposition 1.8]{Pa_AmRo_21} by induction on \( n \), if \( A \notin \{ \lambda \mathrm{I}_n \mid \lambda \in F \} \), then there exists \( P \in \mathrm{GL}_n(D) \) such that \( P^{-1} A P \) has all diagonal entries equal to zero. Notably, when \( D \) has characteristic zero, the set \( \{ \lambda \mathrm{I}_n \mid \lambda \in F \} \) is empty, ensuring that \( A \) is always similar to some matrix \( C \) with zero diagonal entries. At this time, it suffices to consider the matrix \( C \). We decompose \( C \) as \( C = U + L \), where \( U = [u_{ij}] \) is strictly upper triangular, given by  
		\[
		u_{ij} = 
		\begin{cases} 
			t_{ij}, & \text{if } i \leq j, \\  
			0, & \text{if } i > j,  
		\end{cases}
		\]
		and \( L = C - U \) is lower triangular. Clearly, both \( U \) and \( L \) are nilpotent. Hence, this completes the proof for the sum representation. The remaining part, concerning the difference representation, is carried out in a similar manner.
	\end{proof}
	
	Recall  that  every idempotent matrix over a division ring is similar to the matrix $$\mathrm{diag}(1, \ldots, 1, 0, \ldots, 0).$$ Moreover, with Lemmas~\ref{B C},~\ref{C}, and Corollary~\ref{one variable} in place, we now have everything needed to establish the following theorem.

	\begin{theorem}\label{The}
		\sloppy Let \( D \) be a centrally finite algebraically closed division ring with center \( F \), where \( \mathrm{char}(F) \neq 2 \). Suppose that \( p \) is either a nonconstant polynomial in \( F[x] \) or an element of \( F\langle X_1, \dots, X_m \rangle \) with zero constant term, satisfying \( p(F) \neq \{0\} \). Then, every matrix in \( \mathrm{M}_n(D) \) can be expressed as a difference of pairs of multiplicative commutators of elements from \( p(\mathrm{M}_n(D)) \). Moreover, if \( F \) has characteristic \( 0 \), the following hold:
		\begin{enumerate}[\rm (i)]
			\item Any nilpotent matrix \( A \in \mathrm{M}_n(D) \) can be written as an additive commutator of elements in \( p(\mathrm{M}_n(D)) \).
			\item If \( A \in \mathrm{M}_n(D) \) has trace zero, then it can be expressed as either a sum or a difference of two additive commutators of elements in \( p(\mathrm{M}_n(D)) \).
			\item Any matrix \( A \in \mathrm{M}_n(D) \) can be written as a product of two additive commutators of elements in \( p(\mathrm{M}_n(D)) \), except in the case where \( A \) is invertible and \( n \) is odd.
		\end{enumerate}
	\end{theorem}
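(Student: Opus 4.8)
The plan is to assemble Theorem~\ref{The} from the three ingredients already established: the decomposition $A = B - C$ with $B, C \in \mathrm{SL}_n(D)$ provided by Lemma~\ref{B C}, the factorization of every element of $\mathrm{SL}_n(D)$ into a product of two multiplicative commutators of elements of $p(\mathrm{M}_n(D))$ provided by Theorem~\ref{SL}, and the idempotent decompositions of Lemma~\ref{C}. The single observation that glues these together is that every idempotent $E \in \mathrm{M}_n(D)$ is similar to $\mathrm{diag}(1,\dots,1,0,\dots,0)$, hence is diagonalizable, hence lies in $p(\mathrm{M}_n(D))$ by Corollary~\ref{one variable}; the same remark applies to the involutions and the matrices $iA_1, jA_2$ appearing inside the proof of Theorem~\ref{SL}. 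Throughout we take $n > 1$, so that Lemmas~\ref{B C} and~\ref{C} and Corollary~\ref{one variable} all apply.

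For the first assertion, take an arbitrary $A \in \mathrm{M}_n(D)$. Since $\mathrm{char}(F) \neq 2$, Lemma~\ref{B C} supplies $B, C \in \mathrm{SL}_n(D)$ with $A = B - C$, and Theorem~\ref{SL} (whose hypothesis $\mathrm{char}(F)\neq 2$ is exactly what we assume) writes each of $B$ and $C$ as a product of two multiplicative commutators of elements of $p(\mathrm{M}_n(D))$. Subtracting the two expressions exhibits $A$ as a difference of two such pairs of multiplicative commutators, which is precisely the claim.

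For items (i)--(iii) we add the hypothesis $\mathrm{char}(F) = 0$, under which Lemma~\ref{C} is available. If $A$ is nilpotent, Lemma~\ref{C}(i) writes $A = E_1 E_2 - E_2 E_1$ for idempotents $E_1, E_2 \in \mathrm{M}_n(D)$; since $E_1, E_2 \in p(\mathrm{M}_n(D))$ by the observation above, $A$ is an additive commutator of elements of $p(\mathrm{M}_n(D))$, proving (i). If $A$ has trace zero, Lemma~\ref{C}(ii) expresses $A$ as a sum or difference of two additive commutators of idempotents, and the same substitution of idempotents by their membership in $p(\mathrm{M}_n(D))$ yields (ii). Finally, for arbitrary $A$ outside the excluded case (namely $A$ invertible with $n$ odd), Lemma~\ref{C}(iii) writes $A$ as a product of two additive commutators of idempotents, and the identical substitution gives (iii).

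The argument is essentially bookkeeping once the lemmas and Theorem~\ref{SL} are granted; the only genuinely substantive point is the verification — carried out in the discussion preceding the theorem, and split according to Baer's dichotomy into the algebraically closed field case and the quaternionic case — that idempotents and the relevant involutions really are diagonalizable over $D$, so that Corollary~\ref{one variable} places them in $p(\mathrm{M}_n(D))$. I do not anticipate any further obstacle: the characteristic restrictions ($\mathrm{char}(F)\neq 2$ for the main statement, $\mathrm{char}(F)=0$ for (i)--(iii)) are inherited verbatim from Theorem~\ref{SL} and Lemma~\ref{C} respectively, and no closure property of $p(\mathrm{M}_n(D))$ beyond containing all diagonalizable matrices is needed.
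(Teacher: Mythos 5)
Your proposal is correct and follows essentially the same route as the paper, which presents no separate proof but explicitly assembles the theorem from Lemma~\ref{B C} combined with Theorem~\ref{SL} for the main assertion, and from Lemma~\ref{C} together with the diagonalizability of idempotents and Corollary~\ref{one variable} for items (i)--(iii). (One trivial slip: Lemma~\ref{B C} needs no hypothesis on the characteristic; the condition $\mathrm{char}(F)\neq 2$ is used only in Theorem~\ref{SL}.)
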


	\section*{Declarations}
	
	Our statements here are the following:
	
	\begin{itemize}
		\item {\bf Ethical Declarations and Approval:} The authors have no any competing interest to declare that are relevant to the content of this article.
		\item {\bf Competing Interests:} The authors declare no any conflict of interest.
		\item  {\bf Authors' Contributions:} All two listed authors worked and contributed to the paper equally. The final editing was done by the corresponding author Tran Nam Son and was approved by all of the present authors.
		\item {\bf Availability of Data and Materials:} Data sharing not applicable to this article as no data-sets or any other materials were generated or analyzed during the current study.
	\end{itemize}

	\bibliographystyle{amsplain}

\end{document}